\newcommand{\Rd}{{\mathbf{R}^d}}
\newcommand{\Rt}{{\mathbf{R}^2}}
\newtheorem{thm}{Theorem}[section]
\newtheorem{lem}[thm]{Lemma}
\newtheorem{prop}[thm]{Proposition}
\newtheorem{cor}[thm]{Corollary}
\newtheorem{remark}[thm]{Remark}
\theoremstyle{definition}
\newtheorem{defn}{Definition}[section]
\numberwithin{equation}{section}
\DeclareMathOperator{\dist}{dist}
\theoremstyle{definition}
\begin{document}

\begin{titlepage}
\title{\bf Trace Estimates  for Stable Processes}
\author{Rodrigo Ba\~nuelos\thanks{Supported in part by NSF Grant
\#0603701-DMS.}\\Department of  Mathematics
\\Purdue University\\West Lafayette, IN
47906\\banuelos@math.purdue.edu\and Tadeusz  
Kulczycki\thanks{Supported in part by KBN Grant 1 P03A 020 28}
\\Institute of Mathematics \\ Wroc{\l}aw
University of Technology\\ 50-370 Wroc{\l}aw, Poland\\
Tadeusz.Kulczycki@pwr.wroc.pl}
\maketitle

\begin{abstract}

In this paper we study the behaviour in time of the trace
(the partition function) of the heat semigroup associated
with symmetric stable processes in domains of $\Rd$. In
particular, we show that for domains with the so called
{\it{$R$-smoothness}} property the second terms in the asymptotic
as $t\to 0$ involves the surface area of the domain, just as
in the case of Brownian motion. 

\bigskip

\centerline{\bf  Contents}

\begin{itemize}
\item[{}]
\item[\S1.] {\sl Introduction and statement of main result}
\item[\S2.] {\sl Preliminaries}
\item[\S3.] {\sl Proof of main result}
%\item[\S4.] {\sl Further remarks and questions}
\end{itemize}
\end{abstract}
\end{titlepage}

\section{Introduction and statement of main result} \label{sec:Introduction}

Let $X_t$ be a symmetric $\alpha$-stable process in $\Rd$, $\alpha \in
(0,2]$. This is a process with independent and stationary increments
and characteristic function 
$E^0 e^{i \xi X_t} = e^{-t |\xi|^{\alpha}}$, $\xi \in \Rd$, $ t > 0$. By $p(t,x,y) = p_{t}(x-y)$ we
will denote the transition density of this process starting at the point $x$. That is, 
$$
P^{x}(X_t \in B)= \int_{B} p(t,x,y)\, dy.
$$
Since the transition density is obtained from the characteristic function by the 
 inverse Fourier transform, it follows trivially that $p_t(x)$ is a 
radial symmetric decreasing function  and that 
\begin{equation}\label{kernelbound1}
p_t(x) = t^{-d/\alpha} p_1(t^{-1/\alpha} x) \le t^{-d/\alpha} p_1(0), \quad t > 0, \, x \in \Rd.
\end{equation}
Thus in fact
\begin{eqnarray}\label{constant1}
p_t(0) = t^{-d/\alpha} p_1(0)&=& t^{-d/\alpha}\, 
\frac{1}{(2 \pi)^d} \int_{\Rd} e^{-|x|^{\alpha}} \, dx\nonumber\\
& =& 
 t^{-d/\alpha}\, \frac{\omega_d}{(2 \pi)^d\alpha}
 \int_0^{\infty}e^{-s} s^{(\frac{n}{\alpha}  -1)} ds\nonumber\\
 &=& t^{-d/\alpha}\, \frac{\omega_d \Gamma(d/\alpha)}{(2\pi)^d\alpha},
\end{eqnarray}
where $\omega_d$ is the surface area of the unit sphere in $\Rd$.  Of course,  when $\alpha=2$, 
$p_t(0)=(4\pi t)^{-d/2}$, since $\omega_d=\frac{2\pi^{d/2}}{\Gamma(d/2)}$. 

In this paper we will be interested in the process $X_t$ in open sets of $\Rd$ and the 
behavior of the corresponding semigroup.  Let $D \subset \Rd$ be an open set and denote by $\tau_{D} = \inf\{t \ge 0: X_t \notin D\}$ the first exit time of $X_t$ from $D$. By $\{P_t^{D}\}_{t \ge 0}$ we denote the semigroup
on $L^2(D)$ of $X_t$ killed upon exiting $D$.  That is, for any $t>0$ and $f\in L^2(D)$ we define 
$$
P_{t}^{D}f(x) = E^{x}(\tau_{D} > t; f(X_{t})), \quad x \in D. 
$$
The semigroup has transition density $p_D(t,x,y)$ satisfying 
$$
P_{t}^{D}f(x) = \int_{D} p_{D}(t,x,y) f(y) \, dy
$$
and just as in the case of Brownian motion (case $\alpha=2$), 
\begin{equation}\label{PDformula}
p_D(t,x,y) = p(t,x,y) - r_D(t,x,y),
\end{equation}
where
\begin{equation}
\label{rD}
r_D(t,x,y) = E^x(\tau_D < t; p(t - \tau_D, X(\tau_D), y)).
\end{equation}
Whenever  $D$ is bounded (or of finite volume), the operator $P_t^D$ maps $L^2(D)$
into $L^{\infty}(D)$ for every $t>0$.  This follows from (\ref{kernelbound1}), (\ref{rD}), and the general theory of heat semigroups as described in \cite{Da}.  In fact, it follows from 
\cite{Da} that there exists an orthonormal basis of eigenfunctions
$\{\varphi_n\}_{n =1}^{\infty}$ for $L^2(D)$ and corresponding eigenvalues $\{\lambda_n\}_{n = 1}^{\infty}$of the generator of the semigroup $\{P_t^{D}\}_{t \ge 0}$ satisfying
$$0<\lambda_1<\lambda_2\leq \lambda_3\leq \dots$$
  with $\lambda_n\to\infty$
as
$n\to\infty$. That is,  the pair $\{\varphi_n, \lambda_n\}$ satisfies
\begin{equation*}
P_{t}^{D}\varphi_{n}(x) = e^{-\lambda_{n} t} \varphi_{n}(x), \quad x \in D, \,\,\, t > 0.
\end{equation*}
Under such assumptions we have 
\begin{equation}\label{eigenexpan}
p_D(t,x,y) = \sum_{n = 1}^{\infty} e^{-\lambda_{n} t} \varphi_{n}(x) \varphi_n(y).
\end{equation}

Let us point out that the generator of the semigroup $\{P_t^{D}\}_{t \ge 0}$ is the pseudodifferential operator
\begin{equation*} 
  -(-\Delta)^{\alpha / 2} f(x) =
 \lim_{\varepsilon \rightarrow 0^+} 
    {\cal A}_{d,-\alpha} \int\limits_{|y-x| > \varepsilon}
    \frac{f(y) - f(x)}{|x - y|^{d + \alpha}} \, dy\,,
\end{equation*}
where ${\cal A}_{d,\gamma}=\Gamma((d-\gamma)/2)/(2^{\gamma}\pi^{d/2}|\Gamma(\gamma/2)|)$, see \cite{CS1}.

The study of the ``fine" spectral theoretic properties of the killed semigroup of stable processes in domains of Euclidean space has been the subject of many papers in recent years, see for example,  \cite{CS2}, \cite{BLM}, \cite{M}, \cite{BK1}, \cite{BK2}, \cite{BK3}, \cite{Db}, \cite{DbM}, \cite{CS0}, \cite{CS3}, \cite{DK}, \cite{Kw}. In this paper we are interested in the behavior of the trace of this semigroup as $t\to 0$.  More precisely, we study the behavior as $t\to 0$ of the quantity 
\begin{equation}\label{trace1}
Z_D(t) = \int_{D} p_D(t,x,x) \, dx.
\end{equation}
Because of (\ref{eigenexpan}), we can re-write (\ref{trace1}) as
\begin{equation}\label{partition}
Z_D(t) = \sum_{n = 1}^{\infty} e^{-\lambda_{n} t} \int_{D} \varphi_{n}^2(x) \, dx = 
\sum_{n = 1}^{\infty} e^{-\lambda_{n} t}.
\end{equation}

The quantity $Z_D(t)$ is often referred to as the {\it partition function} of $D$. 
For any set $D\subset \Rd$ we denote its volume ($d$-dimensional Lebesgue measure) by $|D|$. It is shown in \cite{BG} that  
for any open set $D\subset \Rd$ of finite volume whose boundary,  $\partial D$, has zero $d$-dimensional Lebesgue measure, 
\begin{equation}\label{bg1}
Z_D(t) \sim\frac{C_1 |D|}{t^{d/\alpha}}, \,\,\, \text{as}\,\,\, t\to 0, 
\end{equation}
with 
$
C_1 =\frac{\omega_d \Gamma(d/\alpha)}{(2\pi)^d\alpha}.
$  
By 
(\ref{bg1}) we means that   
\begin{equation}\label{bg2}
\lim_{t\to 0}t^{d/\alpha}Z_D(t)=C_1 |D|.
\end{equation}
If we now let $N(\lambda)$ be the number of eigenvalues $\{\lambda_j\}$ which do not exceed $\lambda$, it follows from (\ref{bg1}) and the classical Karamata tauberian theorem (see for example \cite{F} or \cite{Simon}, p. 108) that 
\begin{equation}\label{bg3}
N(\lambda) \sim \frac{{C_1 |D|}}{\Gamma(d/\alpha+1)}\,\lambda^{d/\alpha},\,\,\, \text{as}\,\, \lambda\to\infty.
\end{equation}
This is the analogue for stable processes of the celebrated Weyl's asymptotic formula for the eigenvalues  of  the Laplacian.   As we shall show below, (\ref{bg2}) follows easily from (\ref{PDformula}) and (\ref{rD}). 

Our goal in this paper is to obtain the second term in the asymptotics of $Z_D(t)$ under some additional assumptions on the smoothness of $D$.  
Our result is inspired by a similar result  for Brownian motion by M. van den Berg, (\cite{B}, Theorem 1).  
%tk To stay it precisely we need a definition. 
To state it precisely we need a definition. 

\begin{defn}
\label{Rsmooth}
The  boundary, $\partial D$, of an open set $D$ in $\Rd$ is said to be {\it{$R$-smooth}} if for each point $x_0\in \partial D$ there are two open balls $B_1$ and $B_2$ with radii $R$ such that $B_1 \subset D$, $B_2 \subset \Rd \setminus (D \cup \partial D)$  and $\partial B_1 \cap \partial B_2 = x_0$.
\end{defn}

\begin{thm}
\label{mainthm}
Let $D \subset \Rd$, $d \ge 2$, be an open 
bounded set with $R$-smooth boundary. Let $|D|$ denote the volume ($d$-dimensional Lebesgue measure) of $D$ and $|\partial D|$ denote its surface area ($(d-1)$-dimensional Lebesgue measure) of its boundary. Suppose $\alpha\in (0, 2)$. Then 
\begin{equation}
\label{main}
\left|Z_D(t) - \frac{C_1 |D|}{t^{d/\alpha}}  
+ \frac{C_2 |\partial D| t^{1/\alpha}}{t^{d/\alpha}}\right|
\le \frac{C_3 |D| t^{2/\alpha}}{R^2 t^{d/\alpha}},
\quad t > 0,
\end{equation}
where
$$
C_1 = p_1(0) =\frac{\omega_d \Gamma(d/\alpha)}{(2\pi)^d\alpha},
$$

$$
C_2 = C_2(d,\alpha) = \int_{0}^{\infty} r_H(1,(x_1,0,\ldots,0),(x_1,0,\ldots,0)) \, dx_1,
$$

$C_3 = C_3(d,\alpha)$, $H = \{(x_1,\ldots,x_d) \in \Rd: \, x_1 > 0\}$ and $r_H$ is given by (\ref{rD}).  
\end{thm}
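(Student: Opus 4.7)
The plan is to reduce the theorem to an asymptotic analysis of $\int_D r_D(t,x,x)\,dx$ and then approximate the integrand, for $x$ near $\partial D$, by the corresponding half-space quantity $r_H$. Since $p(t,x,x)=p_t(0)=C_1 t^{-d/\alpha}$, the decomposition \eqref{PDformula} gives
\[
Z_D(t)=\frac{C_1|D|}{t^{d/\alpha}}-\int_D r_D(t,x,x)\,dx,
\]
so \eqref{main} is equivalent to
\[
\Bigl|\int_D r_D(t,x,x)\,dx-\frac{C_2|\partial D|\,t^{1/\alpha}}{t^{d/\alpha}}\Bigr|\le \frac{C_3|D|\,t^{2/\alpha}}{R^2\,t^{d/\alpha}}.
\]

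The geometric input is the $R$-smoothness. For $x\in D$ let $x_0\in\partial D$ be a closest boundary point; by Definition \ref{Rsmooth} there are an inner tangent ball $B_1(x_0)$, an outer tangent ball $B_2(x_0)$ (both of radius $R$), and a common tangent half-space $H(x_0)$ (with boundary the tangent hyperplane at $x_0$ and containing $B_1(x_0)$), satisfying
\[
B_1(x_0)\subset D\subset \overline{B_2(x_0)^c},\qquad B_1(x_0)\subset H(x_0)\subset\overline{B_2(x_0)^c}.
\]
Since the map $A\mapsto r_A(t,x,x)$ is decreasing in $A$ (because $\tau_A$ is increasing in $A$), both $r_D(t,x,x)$ and $r_{H(x_0)}(t,x,x)$ lie in $[r_{B_2(x_0)^c}(t,x,x),\,r_{B_1(x_0)}(t,x,x)]$, hence
\[
|r_D(t,x,x)-r_{H(x_0)}(t,x,x)|\le r_{B_1(x_0)}(t,x,x)-r_{B_2(x_0)^c}(t,x,x).
\]
The stable scaling $X_s\stackrel{d}{=}s^{1/\alpha}X_1$ normalizes the right-hand side to a function of the dimensionless quantities $Rt^{-1/\alpha}$ and $\dist(x,\partial D)\,t^{-1/\alpha}$. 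Using the representation \eqref{rD} together with \eqref{kernelbound1}, I would translate the geometric gap $\sim\delta^2/R$ between a tangent ball of radius $R$ and its tangent half-space at depth $\delta$ into a pointwise estimate of the form
\[
|r_D(t,x,x)-r_{H(x_0)}(t,x,x)|\le \frac{1}{R^2}\,t^{(2-d)/\alpha}\,\Phi\!\bigl(\dist(x,\partial D)\,t^{-1/\alpha}\bigr)
\]
for some $\Phi:[0,\infty)\to[0,\infty)$ with finite integral.

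Next I would split $D=D_R\cup(D\setminus D_R)$ with $D_R=\{x\in D:\dist(x,\partial D)<R\}$. On $D_R$ introduce tubular coordinates $x=y+\delta\,n(y)$, $y\in\partial D$, $\delta\in(0,R)$, where $n(y)$ is the inner normal; the $R$-smoothness bounds the principal curvatures by $1/R$, yielding Jacobian $\prod_{i=1}^{d-1}(1-\delta\kappa_i(y))=1+O(\delta/R)$. By rigid motion $r_{H(y)}(t,y+\delta n(y),y+\delta n(y))=r_H(t,(\delta,0,\ldots,0),(\delta,0,\ldots,0))$, and the scaling computation
\[
\int_0^\infty r_H(t,(\delta,0,\ldots,0),(\delta,0,\ldots,0))\,d\delta=C_2\,t^{(1-d)/\alpha}
\]
identifies the main second-order term $C_2|\partial D|\,t^{(1-d)/\alpha}$. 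The accumulated errors -- the pointwise comparison above, the Jacobian deviation $O(\delta/R)$, the truncation of the $\delta$-integral at $R$, and the contribution from $D\setminus D_R$ bounded via $B(x,R)\subset D$ and stable scaling -- each contribute at most a constant multiple of $(|\partial D|/R)\,t^{(2-d)/\alpha}$. The last step is the isoperimetric-type inequality $|\partial D|\,R\le c_d\,|D|$, valid for $R$-smooth domains by computing the volume of the inner tube of width $R/2$ using the bounded curvatures; this converts $|\partial D|/R$ into a constant multiple of $|D|/R^2$ and produces the stated remainder.

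The main obstacle is the quantitative $R^{-2}$ pointwise comparison of $r_{B_R}$ and $r_{B_R^c}$ with $r_H$ near a common tangent point: a single-sided tangent ball would give only $R^{-1}$, so both inner and outer tangent balls must enter symmetrically via the sandwich, and the polynomial decay of $p_s$ inside \eqref{rD} has to be exploited to convert the quadratic geometric gap $\delta^2/R$ into the sharp $R^{-2}$ rate after the $t^{-1/\alpha}$ rescaling.
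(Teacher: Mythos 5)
Your overall strategy matches the paper's: write $Z_D(t) - C_1|D|t^{-d/\alpha} = -\int_D r_D(t,x,x)\,dx$, split into a boundary layer and an interior region, approximate $r_D$ near $\partial D$ by the half-space kernel $r_{H(x)}$ via the sandwich $r_{B_1} \ge r_D,\ r_{H(x)} \ge r_{\overline{B_2}^c}$, identify the main term by scaling, and finish with $|\partial D|\,R \le c_d\,|D|$. However, the central step --- the pointwise comparison between $r_D$ and $r_{H(x)}$ --- is both mis-stated and not proved, and the heuristic you give for it is misleading.

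The correct pointwise bound (Proposition \ref{rHrD} in the paper) carries a single power of $R^{-1}$, namely
\[
|r_D(t,x,x) - r_{H(x)}(t,x,x)|
\le \frac{c\,t^{1/\alpha}}{R\,t^{d/\alpha}}\left(\left(\frac{t^{1/\alpha}}{\delta_D(x)}\right)^{d+\alpha/2-1}\wedge 1\right),
\]
not the $R^{-2}$ bound you write. Integrating over the boundary layer in tube coordinates contributes one more $t^{1/\alpha}$ from the $\delta$-integral, giving $c\,|\partial D|\,t^{2/\alpha}/(R\,t^{d/\alpha})$; the second factor of $R^{-1}$ in the theorem's remainder comes entirely from the isoperimetric-type inequality $|\partial D| \le 2^d|D|/R$. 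Your ``main obstacle'' paragraph, claiming that the sandwich converts a quadratic geometric gap $\delta^2/R$ into an $R^{-2}$ pointwise rate, is not what happens; the two tangent balls are used only to reduce the comparison to $r_{B_1} - r_{\overline{B_2}^c}$, and the resulting bound is still linear in $R^{-1}$. Your own accounting in the next sentence (errors of size $(|\partial D|/R)\,t^{(2-d)/\alpha}$) is in fact consistent only with an $R^{-1}$ pointwise bound, so the two halves of your sketch contradict each other. More importantly, the bound itself is not merely a scaling exercise: the paper's proof of Proposition \ref{rHrD} occupies several pages and requires new-half-space heat-kernel and Green-function machinery (Proposition \ref{pFpD}, the Ikeda--Watanabe identity, intrinsic ultracontractivity of the annulus, the Chen--Song Green function bound for the exterior of a ball), together with a careful decomposition of the exit position of $X$ into near/far and early/late pieces. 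None of this is indicated in your sketch, and a direct manipulation of $\eqref{rD}$ and $\eqref{kernelbound1}$ will not produce the decaying factor $(t^{1/\alpha}/\delta_D(x))^{d+\alpha/2-1}\wedge 1$, which is essential for the $\delta$-integral to be finite. A minor additional issue: $R$-smoothness gives a two-sided ball condition, hence $C^{1,1}$ boundary, but not $C^2$, so the principal curvatures in your tube-coordinate Jacobian need not exist pointwise; the paper avoids this by using van den Berg's estimate on $|\partial D_q|$ (Lemma \ref{Dq}) and the coarea formula rather than curvature bounds.
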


The asymptotic for the trace of the heat kernel when $\alpha=2$ (the case of the Laplacian 
with Dirichlet boundary condition in a  domain of $\Rd$), have been extensively studies by many authors.  The van den Berg \cite{B} result which inspired our result above 
%tk states that under the $R$--smoothness condition, 
states that under the $R$--smoothness condition when $\alpha=2$, 
\begin{equation}
\label{vanden}
\left|Z_D(t) - (4\pi t)^{-d/2}\left(|D|-\frac{\sqrt{\pi t}}{2}|\partial D|\right)\right|
\le \frac{C_d |D| t^{1-d/2}}{R^2}, \,\,\,  t>0.
\end{equation}
For domains with $C^1$ boundaries the result 
\begin{equation}
\label{brossardcarmona}
%tk ) after o(t^{1/2})
Z_D(t) = (4\pi t)^{-d/2}\left(|D|-\frac{\sqrt{\pi t}}{2}|\partial D|+o(t^{1/2})\right),\,\,\, t\to 0,
\end{equation}
 was proved by Brossard and Carmona in \cite{BroCar}.  R. Brown subsequently extended (\ref{brossardcarmona}) to Lipschitz domains  in \cite{russbrown}.  We refer the reader to \cite{B}, \cite{BroCar} and \cite{russbrown} for more on the literature and history of these type of asymptotic results as well as corresponding results for the counting function $N(\lambda)$.  It would be interesting to extend Brown's result to all $\alpha\in (0, 2)$ and we believe such a result is possible.  At present we do not see how to do this.  Finally, we should mention here that the emerging of the surface area of the boundary of $D$ is somewhat surprising in our setting since stable processes 
``do not see" the boundary.  That is, under our assumptions on $D$, for any $x\in D$, $P^x\{X_{\tau_D}\in \partial D\}=0$ (see \cite{Bo}, Lemma 6). What we were naively expecting for the second term was, perhaps, some quantity involving the L\'evy measure of the process. 

The paper is organized as follows.  In \S2 we present several preliminary results which will be used in \S3 for the proof of Theorem \ref{mainthm}.  
%In \S4 we give some explicit bounds on the constant $C_2$ appearing in our main Theorem %\ref{mainthm} above. 
Throughout the paper we will use $c$ to denote positive constants that depend (unless otherwise explicitly stated) only on $d$ and $\alpha$ but whose value may change from line to line. 

\section{Preliminaries} \label{sec:Auxiliary}

We start by setting some standard notation and recalling some well known facts. The ball in $\Rd$ center at $x$ and radius $r$, $\{y \in \Rd: \, |x - y| < r\}$  will be denoted by 
$B(x,r)$ and we will use $\delta_D(x)$ to denote the distance from the point $x$ to the boundary, $\partial D$,  of $D$.  That is, $\delta_D(x)=\dist(x, \partial D)$. The L\'evy measure of the stable processes $X_t$ will be denoted by $\nu$.  Its density, which we will just write as $\nu(x)$, is given by 
\begin{equation}\label{density}
\nu(x) = \frac{{\mathcal{A}}_{d,-\alpha}}{|x|^{d + \alpha}},
\end{equation}
where ${\mathcal{A}}_{d,\gamma}=\Gamma((d-\gamma)/2)/(2^{\gamma}\pi^{d/2}|\Gamma(\gamma/2)|)$. 
We will need the following bound on the transition probabilities of the process $X_t$ which can be found in   \cite{Z}:  For all $x, y\in \Rd$ and $t>0$, 
\begin{equation}
\label{ptxy}
p(t,x,y) \le c \left(\frac{t}{|x - y|^{d + \alpha}} \wedge \frac{1}{t^{d/\alpha}}\right).
\end{equation}
Throughout the paper we will use the fact (\cite{Bo}, Lemma 6) that if $D \subset \Rd$ is an open bounded set satisfying a uniform outer cone condition, then $P^x(X(\tau_D) \in \partial D) = 0$ for any  $x \in D$. 
 The scaling properties of $p_t(x)$ are inherited by the kernels $p_D$ and $r_D$. Namely, 
$$
p_D(t,x,y) = \frac{1}{t^{d/\alpha}} \,  p_{D/t^{1/\alpha}}\left(1,\frac{x}{t^{1/\alpha}},\frac{y}{t^{1/\alpha}}\right),
$$
\begin{equation}
\label{scalingr}
r_D(t,x,y) = \frac{1}{t^{d/\alpha}} \, r_{D/t^{1/\alpha}}\left(1,\frac{x}{t^{1/\alpha}},\frac{y}{t^{1/\alpha}}\right).
\end{equation}
Also, both $p_D$ and $r_D$ are symmetric.  That is, $p_D(t,x,y) = p_D(t,y,x)$
 and $r_D(t,x,y) = r_D(t,y,x)$.    The Green function for the open set $D\subset \Rd$ will be denoted by 
 $G_D(x, y)$.  Recall that in fact, 
$$
G_D(x,y) = \int_0^{\infty} p_D(t,x,y) \, dt, \quad x, y \in \Rd
$$
and that for any such $D$ the expectation of the exit time of the processes
$X_t$ from $D$ is given by the integral of the Green function over the domain.  That is, 
$$E^x(\tau_D)=\int_{D} G_D(x,y) \, dy.$$

\begin{lem}
\label{rDestimate}
Let $D \subset \Rd$ be an open set. For any $x,y \in D$ we have
$$
r_D(t,x,y) \le c (\frac{t}{\delta_D^{d + \alpha}(x)} \wedge \frac{1}{t^{d/\alpha}}).
$$ 
\end{lem}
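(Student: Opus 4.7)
The plan is to establish the two bounds appearing in the minimum separately and then combine them.

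The first bound, $r_D(t,x,y) \le c/t^{d/\alpha}$, is immediate: since $p_D(t,x,y) \ge 0$, the decomposition (\ref{PDformula}) gives $r_D(t,x,y) \le p(t,x,y)$, and the second term in the minimum of (\ref{ptxy}) yields $p(t,x,y) \le c/t^{d/\alpha}$.

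For the second bound $r_D(t,x,y) \le ct/\delta_D(x)^{d+\alpha}$, the naive attempt---bounding directly inside the defining expectation (\ref{rD}), namely $r_D(t,x,y) = E^x[\tau_D < t;\, p(t-\tau_D, X(\tau_D), y)]$---does not immediately produce the factor $\delta_D(x)$, because the kernel $p(t-\tau_D, X(\tau_D), y)$ involves the target point $y$, not $x$. The key trick is to exploit the symmetry $r_D(t,x,y) = r_D(t,y,x)$ and rewrite
$$r_D(t,x,y) = E^y\bigl[\tau_D < t;\, p(t-\tau_D, X(\tau_D), x)\bigr].$$
Since $D$ is open and the stable process is right-continuous, $X(\tau_D) \notin D$ almost surely on $\{\tau_D < \infty\}$. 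For any $x \in D$ and any $z \notin D$, the segment from $x$ to $z$ must meet $\partial D$, so $|z-x| \ge \delta_D(x)$. Applying the off-diagonal half of (\ref{ptxy}) to $z = X(\tau_D)$, and using $t-\tau_D \le t$, gives
$$p(t-\tau_D, X(\tau_D), x) \;\le\; c\,\frac{t-\tau_D}{|X(\tau_D)-x|^{d+\alpha}} \;\le\; c\,\frac{t}{\delta_D(x)^{d+\alpha}}.$$
Taking expectation under $P^y$ and using the trivial bound $P^y(\tau_D < t) \le 1$ yields $r_D(t,x,y) \le ct/\delta_D(x)^{d+\alpha}$. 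Combining the two estimates gives the lemma.

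There is no real obstacle once one realizes that the symmetry of $r_D$ can be used to shift the spatial dependence from $y$ onto $x$; the geometric observation $|z-x| \ge \delta_D(x)$ for $z \notin D$ is elementary, and the two halves of (\ref{ptxy}) supply the matching kernel estimates.
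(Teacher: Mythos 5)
Your proposal is correct and follows essentially the same route as the paper: both use the symmetry $r_D(t,x,y)=r_D(t,y,x)$ to place $x$ as the target of the kernel, then apply the off-diagonal part of (\ref{ptxy}) together with the observation $|X(\tau_D)-x|\ge\delta_D(x)$. Your version is in fact slightly cleaner on a small point: the paper compresses the argument into one inequality chain that appears to push $\tfrac{1}{(t-\tau_D)^{d/\alpha}}$ down to $\tfrac{1}{t^{d/\alpha}}$ (which is the wrong direction), whereas you get the $t^{-d/\alpha}$ half directly from $r_D\le p$, which is the correct way to read the paper's intent.
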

\begin{proof}
By (\ref{rD}) and (\ref{ptxy}) we see that 
\begin{eqnarray*}
r_D(t,x,y) &=&
E^y\left(\tau_D < t; \, p(t - \tau_D,X(\tau_D),x)\right) \\
&\le& c E^y\left(\frac{t}{|x - X(\tau_D)|^{d + \alpha}} \wedge \frac{1}{t^{d/\alpha}}\right) \\
&\le& c \left(\frac{t}{\delta_D^{d + \alpha}(x)} \wedge \frac{1}{t^{d/\alpha}}\right).
\end{eqnarray*}
\end{proof}

\begin{remark}
Before we proceed, let us observe how this estimate implies the Blumenthal--Getoor (\ref{bg2}) estimate given above.  Indeed, by (\ref{PDformula}) we see that 
\begin{equation}\label{bg4}
\frac{p_D(t,x,x)}{p(t,x,x)}=1 - \frac{r_D(t,x,x)}{p(t,x,x)}
\end{equation}
and since 
$$p(t,x,x)=\frac{C_1}{t^{d/\alpha}},$$  
we see that (\ref{bg4}) is equivalent to 
\begin{equation}\label{bg5}
\frac{t^{d/\alpha}}{C_1}\, p_D(t,x,x)=1 - \frac{t^{d/\alpha}}{C_1}\, r_D(t,x,x).
\end{equation}
Thus in order to prove (\ref{bg2}), we must show that 
\begin{equation}\label{bg6}
\frac{t^{d/\alpha}}{C_1}\,\int_D r_D(t,x,x)\, dx \to 0,  \,\,\, \text{as}\,\,\, t\to 0. 
\end{equation}

For $0<t<1$, consider the sub-domains  $D_t=\{x\in D: \delta_D(x)\geq t^{1/2\alpha}\}$ and its complement $D_t^c=\{x\in D: \delta_D(x)<t^{1/2\alpha}\}$.   Under the assumption that $|D|<\infty$ and that the $d$-dimensional Lebesgue measure of its boundary is zero,  we have that $|D_t^c|\to 0$, as $t\to 0$.  (As pointed out us by the referee, the characteristic function of the set $D_t^c$ tends to zero pointwise and since $D$ has finite volume, the Lebesgue dominated convergence thoerem implies that $|D_t^c|\to 0$ without the assumption made in \cite{BG} that $\partial D$  has zero $d$-dimensional Lebesgue measure.)  Since $p_D(t,x,x)\leq p(t, x, x)$, by (\ref{bg5}) we see that $$\frac{t^{d/\alpha}}{C_1}\, r_D(t,x,x)\leq 1,$$ for all $x\in D$.  It follows  that 
\begin{equation}\label{bg7}
\frac{t^{d/\alpha}}{C_1}\,\int_{D_t^c} r_D(t,x,x)\, dx \to 0,  \,\,\, \text{as}\,\,\, t\to 0. 
\end{equation}
On the other hand, by Lemma \ref{rDestimate} we have 
\begin{equation}\label{bg8}
\frac{t^{d/\alpha}}{C_1} r_D(t,x,x) \le c \left(\frac{t^{d/\alpha+1}}{\delta_D^{d + \alpha}(x)} \wedge 1\right).
\end{equation}
For $x\in D_t$ and $0<t<1$, the right hand side of (\ref{bg8}) is bounded above by $ct^{d/2\alpha +1/2}$ and therefore 
\begin{equation}\label{bg9}
\frac{t^{d/\alpha}}{C_1}\,\int_{D_t} r_D(t,x,x)\, dx \leq ct^{d/2\alpha +1/2}|D| 
\end{equation}
and this last quantity goes to $0$ as $t\to 0$.  This proves  (\ref{bg2}).
\end{remark}

\begin{prop}
\label{pFpD}
Let $D$ and  $F$ be open sets in $\Rd$ such that $D \subset F$. Then for any $x, y \in \Rd$ we have
\begin{equation*}
 p_F(t,x,y) - p_D(t,x,y) 
= E^x(\tau_D < t, X(\tau_D) \in F \setminus D; p_F(t - \tau_D, X(\tau_D),y)).
\end{equation*}
\end{prop}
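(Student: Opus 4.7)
The plan is to apply the strong Markov property of $X$ at $\tau_D$, exploiting the fact that $D \subset F$ forces $\tau_D \le \tau_F$ almost surely.

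First, I would reinterpret the killed densities probabilistically via $p_U(t,x,y)\,dy = P^x(X_t \in dy,\ \tau_U > t)$, valid for any open set $U$. Since $\tau_D \le \tau_F$, the event $\{\tau_F > t\}$ admits the disjoint decomposition
$$\{\tau_F > t\} \;=\; \{\tau_D > t\}\ \sqcup\ \{\tau_D \le t < \tau_F\},$$
so taking the difference of the two probabilistic representations shows that $p_F(t,x,y) - p_D(t,x,y)$ is precisely the density at $y$ of $X_t$ on the event $\{\tau_D \le t < \tau_F\}$.

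Next, on that event one has $X(\tau_D) \in F \setminus D$ up to a null set: the inequality $\tau_F > \tau_D$ forces $X(\tau_D) \in F$ (otherwise the process would leave $F$ at time $\tau_D$), and $X(\tau_D) \notin D$ by the definition of $\tau_D$ together with the fact recalled in \S2 that $P^x(X(\tau_D)\in\partial D)=0$. I would then condition on $\mathcal{F}_{\tau_D}$ and apply the strong Markov property: given $\tau_D = s < t$ and $X(\tau_D) = z \in F \setminus D$, the conditional law of $(X_t,\ \ind_{\tau_F > t})$ is the law of a fresh copy of $X$ started at $z$, evaluated at time $t - s$ and restricted to $\{\tau_F > t-s\}$; its density at $y$ is exactly $p_F(t-s, z, y)$. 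Integrating against the joint distribution of $(\tau_D, X(\tau_D))$ then yields the claimed identity.

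The only subtleties are the usual null-set manipulations, namely that $\{\tau_D = t\}$ and $\{X(\tau_D) \in \partial F\}$ carry zero $P^x$-mass in the present setting and hence can be discarded, but these are not genuine obstacles. The substance of the proof is a single application of the strong Markov property to the disjoint decomposition displayed above.
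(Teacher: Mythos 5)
Your argument is correct, and it arrives at the identity via an application of the strong Markov property at $\tau_D$, just as the paper does — but you take a slightly more direct path. The paper first writes $p_F - p_D = r_D - r_F$, observes both terms agree on $\{\tau_D = \tau_F\}$, and then applies strong Markov at $\tau_D$ to convert $E^x(\tau_F < t, \, \tau_D < \tau_F;\ p(t-\tau_F, X(\tau_F), y))$ into $E^x(\tau_D < t, \, \tau_D < \tau_F;\ r_F(t-\tau_D, X(\tau_D), y))$, finally recombining $p - r_F = p_F$. You instead work with the probabilistic representation $p_U(t,x,y)\,dy = P^x(X_t \in dy,\ \tau_U > t)$, decompose $\{\tau_F > t\} = \{\tau_D > t\} \sqcup \{\tau_D \le t < \tau_F\}$, and read off $p_F(t-\tau_D, X(\tau_D), y)$ directly from strong Markov at $\tau_D$; this avoids the $r$-kernel bookkeeping and is arguably cleaner.

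One caveat: to show $X(\tau_D) \notin D$ on $\{\tau_D < \tau_F\}$ you invoke Bogdan's result $P^x(X(\tau_D) \in \partial D) = 0$. That result (as quoted in \S2) requires $D$ to be bounded with a uniform outer cone condition, whereas the proposition is stated for arbitrary open $D \subset F$, so it is not available here. Fortunately it is also not needed: $X(\tau_D) \notin D$ is automatic from right-continuity of the paths and openness of $D$ (if $X(\tau_D) \in D$, then $X_s \in D$ for all $s$ in some right-neighborhood of $\tau_D$, contradicting the definition of $\tau_D$ as an infimum). Bogdan's lemma gives the stronger statement $X(\tau_D) \notin \overline{D}$, but the conclusion asks only for $X(\tau_D) \in F\setminus D$, not $F\setminus\overline{D}$, so nothing beyond right-continuity is required. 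Similarly, on $\{\tau_D < \tau_F\}$ the event $\{X(\tau_D) \in \partial F\}$ is empty outright, not merely null, since $X(\tau_D) \in F$ and $F$ is open.
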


\begin{proof}
We have
\begin{eqnarray}
 p_F(t,x,y) - p_D(t,x,y) 
&=& r_D(t,x,y) - r_F(t,x,y) \\
\label{pFpD1}
&=& E^x(\tau_D < t; p(t - \tau_D, X(\tau_D),y)) \\
\label{pFpD1a}
 &  & \, - E^x(\tau_F < t; p(t - \tau_F, X(\tau_F),y)).
\end{eqnarray}
Note that on the set $\tau_D = \tau_F$ both expected values are equal. We also have $\tau_D \le \tau_F$ so (\ref{pFpD1}--\ref{pFpD1a}) equal 
\begin{eqnarray}
\label{pFpD3}
&& E^x(\tau_D < t, \, \tau_D < \tau_F; p(t - \tau_D, X(\tau_D),y)) \\
\label{pFpD4}
&-& E^x(\tau_F < t, \, \tau_D < \tau_F; p(t - \tau_F, X(\tau_F),y)).
\end{eqnarray}
Now we will prove the key equality
\begin{eqnarray}
\label{pFpD5}
&& E^x(\tau_F < t, \, \tau_D < \tau_F; p(t - \tau_F, X(\tau_F),y)) \\
\label{pFpD6}
&=& E^x(\tau_D < t, \, \tau_D < \tau_F; r_F(t - \tau_D, X(\tau_D),y)).
\end{eqnarray}
First, conditioning we see that 
\begin{eqnarray*}
&& E^x(\tau_D < t, \, \tau_D < \tau_F; r_F(t - \tau_D, X(\tau_D),y)) \\
&=& E^x \left[\tau_D < t, \, \tau_D < \tau_F; E^{X(\tau_D)}(\tau_F < t - s; p(t - s - \tau_F, X(\tau_F), y))\left|_{s = \tau_D} \right.\right]
\end{eqnarray*}
By the strong Markov property this equals 
\begin{eqnarray*}
&& E^x \left[\tau_D < t, \, \tau_D < \tau_F,  
\right. \\
&& \quad \quad \quad \quad  \times \left.
\tau_F \circ \Theta_{\tau_D} + s < t; p(t - s - \tau_F \circ \Theta_{\tau_D}, X(\tau_F) \circ \Theta_{\tau_D}, y)\left|_{s = \tau_D} \right.\right] \\
&=& E^x \left[\tau_D < t, \, \tau_D < \tau_F,   \right. \\
&& \quad \quad \quad \quad   \times \left. 
\tau_F \circ \Theta_{\tau_D} + \tau_D < t; p(t - \tau_F \circ \Theta_{\tau_D} - \tau_D, X(\tau_F) \circ \Theta_{\tau_D}, y)\right].
\end{eqnarray*}
Note that on the set $\tau_D < \tau_F$ we have $\tau_F \circ \Theta_{\tau_D} + \tau_D = \tau_F$ and $X(\tau_F) \circ \Theta_{\tau_D} = X(\tau_F)$.  So the last expression equals  
$$
E^x \left[\tau_D < t, \, \tau_D < \tau_F, \, \tau_F < t; p(t - \tau_F , X(\tau_F), y)\right]
$$
which is the same as (\ref{pFpD5}). This proves the equalities (\ref{pFpD5} - \ref{pFpD6}). Note that the condition $\tau_D < \tau_F$ may be written as $X(\tau_D) \in F \setminus D$. Hence  (\ref{pFpD5} - \ref{pFpD6}) and (\ref{pFpD3} - \ref{pFpD4}) imply the assertion of the proposition.
\end{proof} 

We will need the following well known estimate on the 
Green function of the complement of the unit ball. This  
follows from \cite{CS1}, Lemma 2.5.
\begin{lem}
\label{Green}
Let $\Omega = \overline{B(w,1)}^c$, $w \in \Rd$, $d \ge 2$. We have
$$
G_{\Omega}(x,y) \le \frac{c |x - w|^{\alpha/2} \, \delta_{\Omega}^{\alpha/2}(y)}{|x - y|^{d - \alpha/2}},
\quad x, y \in \Omega.
$$
\end{lem}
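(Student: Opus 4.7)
The plan is to reduce, by translation invariance, to $w=0$ and then invoke Lemma 2.5 of \cite{CS1}, which contains the essential analytic content. The only remaining work is the translation step and a short algebraic rearrangement to arrive at the asymmetric form stated.

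First, the symmetric $\alpha$-stable process is translation invariant, and hence so is its Green function: $G_{\overline{B(w,1)}^c}(x,y)=G_{\overline{B(0,1)}^c}(x-w,y-w)$. After this reduction, set $\Omega_0=\overline{B(0,1)}^c$, so that $\delta_{\Omega_0}(z)=|z|-1$ for $z\in\Omega_0$. It suffices to prove
$$
G_{\Omega_0}(x,y)\le \frac{c\,|x|^{\alpha/2}\delta_{\Omega_0}^{\alpha/2}(y)}{|x-y|^{d-\alpha/2}},\qquad x,y\in\Omega_0.
$$

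Next, I would quote Lemma 2.5 of \cite{CS1}, which provides an explicit upper estimate for $G_{\Omega_0}(x,y)$ in terms of the boundary distances $\delta_{\Omega_0}(x)$, $\delta_{\Omega_0}(y)$, the chord length $|x-y|$, and (because $\Omega_0$ is unbounded and the process is transient only for $d>\alpha$) an additional polynomial dependence on $|x|$ reflecting the slow decay of $G_{\Omega_0}$ at infinity. Applying the elementary inequality $\delta_{\Omega_0}(x)=|x|-1\le |x|$ permits replacing any factor of $\delta_{\Omega_0}(x)^{\alpha/2}$ appearing in the CS1 estimate by $|x|^{\alpha/2}$. This is exactly where the asymmetry between the roles of $x$ and $y$ in the stated bound comes from: the inequality $\delta_{\Omega_0}(\cdot)\le|\cdot|$ is only invoked in the $x$ variable, while the $\delta_{\Omega_0}(y)^{\alpha/2}$ factor is preserved.

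The main -- and essentially only -- obstacle is bookkeeping: Lemma 2.5 of \cite{CS1} is phrased in a form tailored to the exposition there, so one has to verify that, after collecting terms and invoking $\delta_{\Omega_0}(x)\le |x|$, one arrives precisely at the asymmetric combination $|x|^{\alpha/2}\delta_{\Omega_0}^{\alpha/2}(y)|x-y|^{-(d-\alpha/2)}$ displayed above. If the regime $|x-y|\gg |x|\vee|y|$ is not immediately covered by the rearranged CS1 bound, it can be handled separately using the crude comparison $G_{\Omega_0}(x,y)\le G_{\mathbf{R}^d}(x,y)\le c|x-y|^{\alpha-d}$, which in that regime is already smaller than the right-hand side of the stated inequality. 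Once these two regimes are patched together, the lemma is proved.
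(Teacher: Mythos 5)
Your approach matches the paper's: the paper gives no proof of this lemma, stating only that it follows from \cite{CS1}, Lemma 2.5, and your reduction to $w=0$ by translation invariance followed by an appeal to that lemma is precisely the intended route. Two small caveats about your added commentary, though. The regime $|x-y|\gg |x|\vee|y|$ that you propose to patch separately is empty, since $|x-y|\le |x|+|y|\le 2(|x|\vee|y|)$; moreover, on the overlapping regime $|x-y|\asymp |x|$ the crude bound $c|x-y|^{\alpha-d}$ would in any case \emph{not} dominate the right-hand side when $\delta_{\Omega_0}(y)$ is small, so this patch as stated would be both unnecessary and insufficient. Your guess that the CS1 estimate carries extra growth in $|x|$ reflecting slow decay of $G_{\Omega_0}$ at infinity is also off the mark: $G_{\Omega_0}$ decays like the free Green function at infinity (e.g.\ via the Kelvin transform relating $\Omega_0$ to the ball), and Lemma 2.5 of \cite{CS1} already gives the bound for $\overline{B(0,r)}^c$ in essentially the asymmetric form $\delta^{\alpha/2}(y)|x|^{\alpha/2}|x-y|^{\alpha/2-d}$, so the ``bookkeeping'' you anticipate is minimal.
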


We will say that an open set $D \subset \Rd$ satisfies {\it{the uniform
outer ball condition with radius $1$}} if at each point $z \in \partial
D$ there exists a ball $B(w,1) \subset D^c$ such that $\partial D \cap
\partial B(w,1) = z$.

An easy corollary  of Lemma \ref{Green} is the  following result.
\begin{cor}
\label{Green1}
Let $D \subset \Rd$, $d \ge 2$ be an 
open set satisfying the uniform outer ball property 
with radius $1$. Then we have
$$
G_{D}(x,y) \le \frac{c \delta_D^{\alpha/2}(y) \, (|x - y| + 
\delta_D(x) + 1)^{\alpha/2}}{|x - y|^{d - \alpha/2}},
\quad x, y \in D.
$$
\end{cor}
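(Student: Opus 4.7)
The plan is to reduce to Lemma \ref{Green} by choosing, for each fixed $y \in D$, a single exterior ball that ``witnesses'' the distance from $y$ to $\partial D$, and then using domain monotonicity of the Green function.

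First, fix $x, y \in D$. Since $\partial D$ is closed and the infimum defining $\delta_D(y)$ is attained on the compact set $\partial D \cap \overline{B(y, \delta_D(y)+1)}$, I choose $z \in \partial D$ with $|y-z| = \delta_D(y)$. The uniform outer ball condition then gives a point $w \in \Rd$ with $B(w,1) \subset D^c$ and $z \in \partial B(w,1) \cap \partial D$. Set $\Omega = \overline{B(w,1)}^c$. Since $B(w,1) \subset D^c$ and the points of $\partial B(w,1)$ lie in $D^c \cup \partial D$ (using $\overline{B(w,1)} \subset \overline{D^c} = D^c \cup \partial D$ for the open set $D$), no point of $\overline{B(w,1)}$ lies in $D$; hence $D \subset \Omega$. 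By the standard domain monotonicity $G_D(x,y) \le G_{\Omega}(x,y)$, so it suffices to estimate $G_{\Omega}(x,y)$ via Lemma \ref{Green}.

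Next I control the two factors appearing on the right-hand side of Lemma \ref{Green}. For the distance factor, since $y \notin \overline{B(w,1)}$, the nearest point of $\partial \Omega$ to $y$ lies on the ray from $w$ through $y$, giving $\delta_{\Omega}(y) = |y-w| - 1$. The triangle inequality yields
$$
|y - w| \le |y - z| + |z - w| = \delta_D(y) + 1,
$$
so $\delta_{\Omega}(y) \le \delta_D(y)$. For the factor $|x-w|$, combining the triangle inequality with $\delta_D(y) \le \delta_D(x) + |x-y|$ gives
$$
|x - w| \le |x - y| + |y - w| \le |x - y| + \delta_D(y) + 1 \le 2|x-y| + \delta_D(x) + 1,
$$
and therefore $|x-w|^{\alpha/2} \le c\,(|x-y| + \delta_D(x) + 1)^{\alpha/2}$.

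Plugging both bounds into the estimate of Lemma \ref{Green},
$$
G_D(x,y) \le G_{\Omega}(x,y) \le \frac{c\,|x-w|^{\alpha/2}\,\delta_{\Omega}^{\alpha/2}(y)}{|x-y|^{d-\alpha/2}} \le \frac{c\,\delta_D^{\alpha/2}(y)\,(|x-y| + \delta_D(x) + 1)^{\alpha/2}}{|x-y|^{d-\alpha/2}},
$$
which is the desired inequality. There is no real obstacle here; the only step that requires a brief geometric argument is the identification $\delta_{\Omega}(y) = |y-w|-1$ together with the bound $|y-w| \le \delta_D(y)+1$ coming from the tangency of $B(w,1)$ to $\partial D$ at the nearest point $z$.
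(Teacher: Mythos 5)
Your proposal is correct and follows essentially the same route as the paper's proof: fix $y$, pick the nearest boundary point, use the exterior tangent ball to get $\Omega = \overline{B(w,1)}^c \supset D$, invoke domain monotonicity and Lemma \ref{Green}, and then control $\delta_\Omega(y)$ and $|x-w|$ by triangle inequalities. You spell out two small steps the paper leaves implicit (the identification $\delta_\Omega(y) \le \delta_D(y)$ — in fact equality, since $D\subset\Omega$ also gives $\delta_\Omega(y)\ge\delta_D(y)$ — and the Lipschitz bound $\delta_D(y)\le\delta_D(x)+|x-y|$), but otherwise the argument is the same.
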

\begin{proof}
Let $y \in D$ and $y_* \in \partial D$ be such that $|y - y_*| = \delta_D(y)$. There exists a ball $B(w,1) \subset D^c$ such that $\partial D \cap \partial B(w,1) = y_*$.
By Lemma \ref{Green} we obtain that $G_D(x,y)$ is bounded from above by
$$
G_{\overline{B(w,1)}^c}(x,y) \le \frac{c |x - w|^{\alpha/2} \, \delta_{D}^{\alpha/2}(y)}{|x - y|^{d - \alpha/2}} \le
\frac{c \delta_D^{\alpha/2}(y) \, (|x - y| + \delta_D(x) + 1)^{\alpha/2}}{|x - y|^{d - \alpha/2}}.
$$
\end{proof}

\begin{lem}
\label{ring}
Let $d \ge 2$, $b > 0$, $\Omega = B(0,2 b) \setminus \overline{B(0,b)}$ and $x \in \Rd$. Then we have
$$
E^x(\tau_{\Omega}) \le c b^{\alpha/2} \delta_{\Omega}^{\alpha/2}(x),
$$
$$
P^x(X(\tau_{\Omega}) \in B^c(0,2 b)) \le c b^{- \alpha/2} \delta_{B(0,b)}^{\alpha/2}(x).
$$
\end{lem}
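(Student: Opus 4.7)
Both inequalities are trivial for $x\notin\Omega$, so assume $x\in\Omega$. The scaling $Y_t = X_{b^{\alpha}t}/b$ defines an $\alpha$-stable process on $\Omega/b = B(0,2)\setminus\overline{B(0,1)}$ satisfying $\tau_\Omega^X = b^\alpha \tau^Y_{\Omega/b}$ and $\delta_\Omega(x) = b\,\delta_{\Omega/b}(x/b)$, so it suffices to treat $b=1$.

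For the first inequality, write $E^x(\tau_\Omega) = \int_\Omega G_\Omega(x,y)\,dy$ and combine two monotonicity bounds. From $\Omega\subset\overline{B(0,1)}^c$, Lemma \ref{Green} with $w=0$, the symmetry $G_\Omega(x,y)=G_\Omega(y,x)$, and $|y|\le 2$ yield $G_\Omega(x,y) \le c(|x|-1)^{\alpha/2}/|x-y|^{d-\alpha/2}$. From $\Omega\subset B(0,2)$ (which satisfies the uniform outer ball condition with radius $1$ via the exterior ball $B(3z/2,1)$ at each $z\in\partial B(0,2)$), Corollary \ref{Green1}, symmetry, and $|x-y|\le 4$ yield $G_\Omega(x,y) \le c(2-|x|)^{\alpha/2}/|x-y|^{d-\alpha/2}$. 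The minimum gives $G_\Omega(x,y) \le c\delta_\Omega(x)^{\alpha/2}/|x-y|^{d-\alpha/2}$, and since $\int_\Omega |x-y|^{-(d-\alpha/2)}\,dy \le c\int_0^4 r^{\alpha/2-1}\,dr < \infty$, integration yields $E^x(\tau_\Omega) \le c\delta_\Omega(x)^{\alpha/2}$.

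For the second inequality, the Ikeda--Watanabe formula gives $P^x(X(\tau_\Omega)\in B^c(0,2)) = \int_\Omega G_\Omega(x,y)\nu_+(y)\,dy$, where $\nu_+(y) := \int_{B^c(0,2)}\nu(y-z)\,dz \le c(2-|y|)^{-\alpha}$ (from $\{|z|>2\}\subset\{|y-z|>2-|y|\}$ and the explicit form of $\nu$). For $|x|\ge 5/4$ the bound is trivial since $(|x|-1)^{\alpha/2}$ is bounded below and $P^x\le 1$. For $|x|<5/4$, split $\Omega=\Omega_1\cup\Omega_2$ with $\Omega_1=\Omega\cap\{|y|\le 3/2\}$ and $\Omega_2=\Omega\cap\{|y|>3/2\}$. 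On $\Omega_1$, $\nu_+\le c$ is bounded, and the Green function bound from the first part gives $\int_{\Omega_1}G_\Omega\nu_+\,dy \le c(|x|-1)^{\alpha/2}$. On $\Omega_2$, the uniform gap $|x-y|\ge |y|-|x|\ge 1/4$ controls the $|x-y|^{-(d-\alpha/2)}$ factor, and a multiplicative combination of the two one-sided Green function bounds (equivalently, a Chen--Song type two-sided estimate $G_\Omega(x,y)\le c(|x|-1)^{\alpha/2}(2-|y|)^{\alpha/2}/|x-y|^d$) absorbs the $(2-|y|)^{-\alpha}$ singularity of $\nu_+$, leaving an integrand controlled by $c(|x|-1)^{\alpha/2}(2-|y|)^{-\alpha/2}$, which is integrable over $\Omega_2$ since $\alpha<2$. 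Reverting the scaling completes the proof.

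The main technical obstacle is the bound on $\Omega_2$: each of the one-sided Green function estimates individually furnishes only one boundary-decay factor, so extracting both $(|x|-1)^{\alpha/2}$ and $(2-|y|)^{\alpha/2}$ simultaneously requires a two-sided Chen--Song type estimate valid for $C^{1,1}$ domains. This sharper bound is essential because $\nu_+$ near $\partial B(0,2)$ is not integrable for $\alpha\ge 1$, so the singularity must be cancelled by the boundary decay of $G_\Omega$ in the $y$-variable rather than by spatial integrability alone.
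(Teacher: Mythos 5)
Your route is genuinely different from the paper's. After the identical scaling reduction to $b=1$, the paper simply observes that $\Omega=B(0,2)\setminus\overline{B(0,1)}$ is a bounded $C^{1,1}$ domain and cites two known results directly: \cite{K}, Proposition 4.9 for $E^x(\tau_\Omega)\le c\,\delta_\Omega^{\alpha/2}(x)$, and \cite{CS1}, Theorem 1.5 (equivalently \cite{CS2}, Theorem 1.2) for the exit distribution estimate. You instead reconstruct both bounds from the paper's own Green function material plus Ikeda--Watanabe. Your derivation of the first inequality is correct and rather nice: it uses only Lemma \ref{Green}, Corollary \ref{Green1}, symmetry of $G_\Omega$, and integration, so it is more self-contained than the citation. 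The verification that $B(0,2)$ satisfies the uniform outer ball condition with radius $1$ is straightforward, and the resulting bound $G_\Omega(x,y)\le c\,\delta_\Omega^{\alpha/2}(x)|x-y|^{-(d-\alpha/2)}$ integrates as you say.

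Two points on the second inequality. First, the phrase ``a multiplicative combination of the two one-sided Green function bounds (equivalently, a Chen--Song type two-sided estimate)'' is not right: the two one-sided bounds do not imply the two-sided estimate. Given $G\le A$ and $G\le B$ one can take $\min$ or the geometric mean, but the latter produces only $\delta_\Omega^{\alpha/4}(x)\,\delta_\Omega^{\alpha/4}(y)/|x-y|^{d-\alpha/2}$, which on $\Omega_2$ leaves $(2-|y|)^{-3\alpha/4}$ against $\nu_+$ and so fails to integrate once $\alpha\ge 4/3$. What you actually need is the genuinely sharper $C^{1,1}$ Green function upper bound $G_\Omega(x,y)\le c\,\delta_\Omega^{\alpha/2}(x)\,\delta_\Omega^{\alpha/2}(y)/|x-y|^{d}$, which is a cited result (Chen--Song, \cite{CS1}) of comparable weight to what the paper invokes; it should be quoted as such, not presented as following from the one-sided bounds. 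Second, a minor technicality: Proposition \ref{IWprop} requires $A\subset\Omega^c\setminus\partial\Omega$, while $B^c(0,2)$ meets $\partial\Omega$ on $\{|y|=2\}$; one should apply the formula to the open set $\{|y|>2\}$ and invoke the fact (\cite{Bo}, Lemma 6, used elsewhere in the paper) that $P^x(X(\tau_\Omega)\in\partial\Omega)=0$. With those two repairs your argument is sound, and it buys a derivation that reuses the paper's own tools rather than importing the boundary-behavior estimates wholesale.
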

\begin{proof}
For any open set $D \subset \Rd$, 
Borel set $A \subset \Rd$, $b > 0$, $x \in \Rd$ 
we have the following scaling properties
$$
E^{bx}(\tau_{bD}) = b^{\alpha} E^{x}(\tau_{D}),
$$
$$
P^{bx}(X(\tau_{bD}) \in bA) = P^{x}(X(\tau_{D}) \in A).
$$
It follows that we only need to deal with the case  $b = 1$.

The ring $\Omega = B(0,2) \setminus \overline{B(0,1)}$ is a bounded $C^{1,1}$ domain.
For bounded $C^{1,1}$ domains it 
is known that  $E^x(\tau_D) \le c(D,\alpha)
 \delta_D^{\alpha/2}(x)$, (\cite{K}, Proposition 4.9)
and there are also well known estimates for  
 $P^{x}(X(\tau_{D}) \in \cdot)$ (see \cite{CS1}, 
 Theorem 1.5, see also \cite{CS2}, Theorem 1.2).
  The lemma for $b = 1$ follows from these estimates.
\end{proof}

\begin{lem}
\label{intrinsic}
Let $T > 0$, $d \ge 2$ and $\Omega = B(0,2) \setminus \overline{B(0,1)}$. 
There exists a constant $C_T$ 
(depending on $T$, $d$, $\alpha$) such that for any $t \ge T$ we have
$$
p_{\Omega}(t,x,y) \le C_T \delta_{\Omega}^{\alpha/2}(x) 
\delta_{\Omega}^{\alpha/2}(y).
$$
\end{lem}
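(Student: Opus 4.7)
The plan is to leverage the semigroup property to split $p_\Omega(t,x,y)$ into three factors: two short-time outer factors that carry the boundary decay, and one middle factor that is uniformly bounded in $L^\infty$. For $t \ge T$, I would apply Chapman--Kolmogorov twice to write
$$p_\Omega(t,x,y) = \int_\Omega \int_\Omega p_\Omega(T/4,x,u)\, p_\Omega(t-T/2,u,v)\, p_\Omega(T/4,v,y)\,du\,dv,$$
which is legitimate since $t - T/2 \ge T/2 > 0$.

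The middle factor is easy to control: because $r_\Omega \ge 0$, formula (\ref{PDformula}) gives $p_\Omega \le p$, and by the radial decreasing property (\ref{kernelbound1}) we have $p_s(u-v) \le p_s(0) = C_1 s^{-d/\alpha}$, so
$$p_\Omega(t-T/2,u,v) \le C_1 (T/2)^{-d/\alpha}$$
uniformly in $u,v \in \Omega$.

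For the outer factors, I would integrate in $u$ (resp.\ $v$) first:
$$\int_\Omega p_\Omega(T/4,x,u)\,du = P^x(\tau_\Omega > T/4) \le \frac{4}{T}\,E^x(\tau_\Omega),$$
by Markov's inequality, and then invoke Lemma~\ref{ring} with $b = 1$ to obtain $E^x(\tau_\Omega) \le c\,\delta_\Omega^{\alpha/2}(x)$. The same reasoning applied to the $y$-integral yields the matching factor $\delta_\Omega^{\alpha/2}(y)$. Combining the three estimates gives
$$p_\Omega(t,x,y) \le C_T\,\delta_\Omega^{\alpha/2}(x)\,\delta_\Omega^{\alpha/2}(y),$$
with an explicit $C_T$ of the form $c\,T^{-2-d/\alpha}$ depending only on $T,d,\alpha$.

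There is no really hard step in this argument---it is a routine combination of the semigroup property, the fact that the free kernel peaks at the origin, and the exit-time bound of Lemma~\ref{ring}. The only point that requires some care is arranging the time decomposition $t = T/4 + (t-T/2) + T/4$ so that every slot is positive for all $t \ge T$; this ensures that the $L^\infty$ bound on the middle kernel and the Markov-inequality bound on the outer exit probabilities are simultaneously available.
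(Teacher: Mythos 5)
Your proof is correct and takes a genuinely different route from the paper's. The paper's proof is a two-line invocation of heavy machinery: it cites intrinsic ultracontractivity of $\{P_t^\Omega\}$ for bounded $C^{1,1}$ domains (Chen--Song, \cite{CS2} Theorem 4.6), which directly gives $p_\Omega(t,x,y) \le C_T\varphi_1(x)\varphi_1(y)$ for $t \ge T$, and then uses the known pointwise bound $\varphi_1(x) \le c\delta_\Omega^{\alpha/2}(x)$ (\cite{CS2} Theorem 4.2). Your argument instead is elementary and self-contained within the tools already in the paper: the three-fold Chapman--Kolmogorov splitting $t = T/4 + (t-T/2) + T/4$, the free-kernel bound $p_\Omega(s,u,v) \le p_s(0) = C_1 s^{-d/\alpha}$ on the middle slot, and the identification $\int_\Omega p_\Omega(T/4,x,u)\,du = P^x(\tau_\Omega > T/4)$ on the outer slots, which Markov's inequality plus Lemma~\ref{ring} converts into $\frac{4c}{T}\delta_\Omega^{\alpha/2}(x)$. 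Each step checks out: the time slots are all positive for $t \ge T$, the middle kernel is monotone decreasing in its first argument so $(t-T/2)^{-d/\alpha} \le (T/2)^{-d/\alpha}$, and the symmetry $p_\Omega(T/4,v,y) = p_\Omega(T/4,y,v)$ lets you handle the $y$-factor identically. What your approach buys is independence from the IU citation and from the ground-state estimate; what the paper's approach buys is brevity and a conceptual explanation (the bound is exactly the IU inequality). Your explicit constant $C_T \sim T^{-2-d/\alpha}$ is also a nice byproduct that the IU route does not produce without further work.
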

\begin{proof}
It is well known (\cite{CS2}, Theorem 4.6) that the semigroup
 $\{P_t^{\Omega}\}_{t \ge 0}$ is intrinsically ultracontractive. It follows that for any $t \ge T > 0$ we have
$$
p_{\Omega}(t,x,y) \le C_T \varphi_1(x) \varphi_1(y),
$$
where $\varphi_1$ is the ground state eigenfunction for $\Omega$. It is also well known (\cite{CS2}, Theorem 4.2)  that $\varphi_1(x) \le
c \delta_{\Omega}^{\alpha/2}(x)$, and the lemma follows.
\end{proof}

We will need the following ``space-time" generalization of the Ikeda-Watanabe formula \cite{IW}. Such a generalization has been proved for the relativistic stable process in \cite{KS}, Proposition 2.7. The proof of this generalization in our case is exactly the same as in \cite{KS} and is omitted.
\begin{prop}
\label{IWprop}
Let $D$ be an open nonempty set and $A$  a Borel set such that $A \subset D^c \setminus \partial D$. Assume that $0 \le t_1 < t_2 < \infty$, $x \in D$. Then we have
$$
P^x(X(\tau_D) \in A, \, t_1 < \tau_D < t_2) = 
\int_D \int_{t_1}^{t_2} p_D(s,x,y) \, ds \int_A \nu(y - z) \, dz \, dy.
$$
\end{prop}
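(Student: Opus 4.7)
\noindent\textbf{Proof plan for Proposition \ref{IWprop}.}

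The plan is to deduce the formula from the Lévy system for $X$ applied to a suitably chosen predictable integrand. Since $X$ is a pure-jump Lévy process whose Lévy measure has density $\nu$, the compensator of its jump measure is $\nu(z-y)\,dz\,ds$; equivalently, for every non-negative Borel function $f:\Rd\times\Rd\to[0,\infty)$ vanishing on the diagonal and every non-negative predictable process $H_s$ one has the identity
$$
E^x\!\sum_{s>0}H_s\,f(X_{s-},X_s)\;=\;E^x\!\int_0^{\infty}H_s\!\int_{\Rd}f(X_s,z)\,\nu(z-X_s)\,dz\,ds.
$$
This is the standard Lévy system formula (see \cite{IW}); it is the only probabilistic fact the argument requires.

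I would apply this identity with $f(y,z)=\ind_D(y)\ind_A(z)$ and with the predictable integrand
$$
H_s \;=\; \ind_{(t_1,t_2)}(s)\,\ind_{\{s\le \tau_D\}},
$$
which is predictable because $\{s\le\tau_D\}=\{\tau_D\ge s\}$ is left-continuous in $s$. On the left-hand side, a term in the sum is non-zero only when $X_{s-}\in D$ and $X_s\in A\subset D^c\setminus\partial D$; combined with $s\le\tau_D$ this forces $s=\tau_D$ and $X(\tau_D)\in A$. Therefore the sum reduces to the single contribution $\ind_{(t_1,t_2)}(\tau_D)\,\ind_A(X(\tau_D))$, whose expectation is exactly $P^x(t_1<\tau_D<t_2,\;X(\tau_D)\in A)$.

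On the right-hand side, the integrand $\ind_{\{s\le\tau_D\}}\ind_D(X_s)$ differs from $\ind_{\{s<\tau_D\}}$ only on the $s$-negligible set $\{s=\tau_D\}$, so by the definition of $p_D$,
$$
E^x\!\int_{t_1}^{t_2}\ind_{\{s\le\tau_D\}}\!\int_A\nu(z-X_s)\,dz\,ds
\;=\;\int_{t_1}^{t_2}\!\int_D p_D(s,x,y)\!\int_A\nu(z-y)\,dz\,dy\,ds.
$$
Using the symmetry $\nu(z-y)=\nu(y-z)$ built into (\ref{density}) and Fubini's theorem to exchange the order of integration yields the asserted formula.

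The main technical point to verify carefully is the predictability of $\ind_{\{s\le\tau_D\}}$ (which is standard for stopping times) and the negligibility of the boundary contribution. The latter is automatic here: since $A$ is assumed to be strictly separated from $\partial D$, no question arises about a possible jump landing on $\partial D$, and since $\tau_D$ is totally inaccessible and admits no atoms, strict versus non-strict inequalities at $t_1,t_2$ make no difference. As the paper notes, this reproduces verbatim the relativistic-stable argument from \cite{KS}, Proposition~2.7.
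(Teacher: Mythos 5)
Your Lévy-system route is the standard one (and, since the paper merely cites \cite{KS} and gives no proof of its own, presumably the intended one); the plan is broadly right, but there is a real gap in the reduction of the left-hand side. After applying the Lévy system with $f(y,z)=\ind_D(y)\ind_A(z)$, the single surviving term at $s=\tau_D$ is $\ind_{(t_1,t_2)}(\tau_D)\,\ind_D(X_{\tau_D-})\,\ind_A(X_{\tau_D})$; you silently dropped the factor $\ind_D(X_{\tau_D-})$. What the identity actually gives you is therefore $P^x(t_1<\tau_D<t_2,\,X_{\tau_D-}\in D,\,X_{\tau_D}\in A)$, and you still have to rule out the event $\{X_{\tau_D-}\in\partial D,\,X_{\tau_D}\in A\}$. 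Your paragraph on the ``boundary contribution'' only treats jumps \emph{landing} on $\partial D$ (indeed impossible since $A\cap\partial D=\emptyset$); the relevant concern is the left limit $X_{\tau_D-}$ lying on $\partial D$, which the hypothesis on $A$ does not touch.

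The missing fact is true and the fix fits inside your own framework: apply the Lévy system once more, now with $f(y,z)=\ind_{\partial D}(y)\ind_A(z)$ and the same $H_s=\ind_{\{s\le\tau_D\}}$. On the right-hand side the integrand contains $\ind_{\{s\le\tau_D\}}\ind_{\partial D}(X_s)$, which vanishes for Lebesgue-a.e.\ $s$ because $X_s\in D$ for all $s<\tau_D$ and $D\cap\partial D=\emptyset$; so the right side is $0$, forcing $P^x(X_{\tau_D-}\in\partial D,\,X_{\tau_D}\in A)=0$, and your argument then closes. Two smaller points: the claim that $\tau_D$ is totally inaccessible is stronger than you need and is not free for a general open set (it is essentially equivalent to the very ``no exit from the boundary'' statement you skipped); the atomlessness of the law of $\tau_D$, which is all you use, follows at once from the continuity in $t$ of $\int_D p_D(t,x,y)\,dy$. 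Also, \cite{IW} is the source of the exit-distribution formula itself, not a reference for the general Lévy-system identity you invoke.
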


The following proposition is already known for relativistic stable process \cite{KS} (see Theorem 4.2).
\begin{prop}
\label{heat}
Let $\Omega = (\overline{B(w,1)})^c$, $w \in \Rd$, $d \ge 2$. There exists a constant $c$ such that for any $t > 0$, $x ,y \in \Omega$ and $|x - y| \ge a > 0$, we have
\begin{equation}
\label{heatineq}
p_{\Omega}(t,x,y) \le \frac{c (t \vee 1) \delta_{\Omega}^{\alpha/2}(y)}{(a \wedge 1)^{\alpha/2} |x - y|^{d + \alpha}}
\end{equation}
\end{prop}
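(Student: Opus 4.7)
I follow the strategy of \cite{KS}, Theorem 4.2. By translation assume $w=0$ and use the symmetry $p_\Omega(t,x,y)=p_\Omega(t,y,x)$. If $\delta_\Omega(y)\ge 1$ the bound is immediate from $p_\Omega\le p$ and (\ref{ptxy}), since $\delta_\Omega^{\alpha/2}(y)\ge 1$, $t\le t\vee 1$, and $(a\wedge 1)^{\alpha/2}\le 1$. So assume $\delta_\Omega(y)<1$, which places $y$ inside the ring $U:=B(0,2)\setminus\overline{B(0,1)}\subset \Omega$; on $U$ both Lemma \ref{ring} (with $b=1$) and Lemma \ref{intrinsic} apply.

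The strong Markov property at $\tau_U$ gives
\[
p_\Omega(t,y,x)=p_U(t,y,x)+E^y\bigl[\tau_U<t,\,X(\tau_U)\in\Omega;\,p_\Omega(t-\tau_U,X(\tau_U),x)\bigr],
\]
and Proposition \ref{IWprop} rewrites the expectation as an integral against $p_U(s,y,v)\,\nu(v-z)$ over $v\in U$, $z\in\Omega\setminus U=\{|z|>2\}$, and $s\in(0,t)$. I split the $z$-integration at $|z-x|=|x-y|/2$. On $\{|z-x|\ge|x-y|/2\}$ the bound (\ref{ptxy}) yields $p_\Omega(t-s,z,x)\le c(t\vee 1)/|x-y|^{d+\alpha}$, and what remains is $P^y(\tau_U<t,\,X(\tau_U)\in B^c(0,2))\le c\delta_\Omega^{\alpha/2}(y)$ by the exit-distribution estimate of Lemma \ref{ring}. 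On $\{|z-x|<|x-y|/2\}$ one has $|z-y|\ge|x-y|/2$, so once $|x-y|$ exceeds a fixed multiple of $\diam(U)=4$ also $|v-z|\ge c|x-y|$ for every $v\in U$; then $\nu(v-z)\le c/|x-y|^{d+\alpha}$ and the residual mass is at most $E^y\tau_U\cdot\int_\Omega p_\Omega(t-s,z,x)\,dz\le c\delta_\Omega^{\alpha/2}(y)$, again by Lemma \ref{ring}. Summing the two contributions gives the desired bound whenever $|x-y|$ exceeds that fixed constant.

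The term $p_U(t,y,x)$ is nonzero only when $x\in U$, in which case $x,y$ both lie in the ring. For $t\ge 1$, Lemma \ref{intrinsic} gives $p_U(t,y,x)\le C\delta_\Omega^{\alpha/2}(x)\delta_\Omega^{\alpha/2}(y)\le c\delta_\Omega^{\alpha/2}(y)$, since $\delta_\Omega(x)\le 1$ on $U$. For $0<t<1$, I bootstrap via the semigroup identity $p_U(t,y,x)=\int_U p_U(t/2,y,z)\,p_U(t/2,z,x)\,dz$, combining $p_U\le p$ with the hitting-probability estimate of Lemma \ref{ring} to retain the $\delta_\Omega^{\alpha/2}(y)$ decay. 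The short-distance range $a\le|x-y|\le c_0$ for a fixed $c_0$, where the splitting at $|x-y|/2$ breaks down, is absorbed by the factor $(a\wedge 1)^{-\alpha/2}$ in (\ref{heatineq}), which compensates the weakness of the hypothesis $|x-y|\ge a$ when $a\le 1$.

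\emph{Main obstacle.} The most delicate step is controlling the interior term $p_U(t,y,x)$ for small $t$ with $x,y\in U$: Lemma \ref{intrinsic} does not immediately apply there, so the semigroup bootstrap must simultaneously preserve the sharp boundary decay $\delta_\Omega^{\alpha/2}(y)$, avoid losing a negative power of $t$, and correctly produce the $(a\wedge 1)^{-\alpha/2}$ factor through the various sub-cases.
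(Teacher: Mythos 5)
Your argument correctly reproduces the paper's strategy when $|x-y|$ is large compared to a fixed scale, but it leaves a genuine gap exactly where you flag the ``main obstacle,'' and the paper's proof in fact sidesteps that obstacle entirely by a device you are missing. The key point you do not use is that the localizing ring should have radius comparable to $a$, not fixed radius $1$. The paper first reduces to $\delta_\Omega(y) < (a\wedge 1)/8$ (not merely $\delta_\Omega(y)<1$): if $\delta_\Omega(y)\ge (a\wedge 1)/8$ then $\delta_\Omega^{\alpha/2}(y)/(a\wedge 1)^{\alpha/2}\gtrsim 1$ and the crude bound $p_\Omega\le p\le ct/|x-y|^{d+\alpha}$ already finishes. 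In the remaining case it introduces the small ring $R=B(p,2b)\setminus\overline{B(p,b)}$ with $b=(a\wedge 1)/8$ and $p$ chosen so that $y\in R$ and $\delta_\Omega(y)=\delta_{B(p,b)}(y)$. Because $\diam R = 4b \le |x-y|/2$, the point $x$ (and indeed the whole ball $B(x,|x-y|/4)$) is automatically disjoint from $R$, so the strong-Markov decomposition at $\tau_R$ never produces an ``interior'' term $p_R(t,y,x)$ at all, and the split of the exit location into $B(x,|x-y|/4)$ and its complement works for every $|x-y|\ge a$. The scaling $b\sim a$ is also precisely what injects the $(a\wedge 1)^{-\alpha/2}$ factor, via Lemma~\ref{ring}: $E^y\tau_R\le cb^{\alpha/2}\delta_\Omega^{\alpha/2}(y)$ and $P^y(X(\tau_R)\in B^c(p,2b))\le cb^{-\alpha/2}\delta_\Omega^{\alpha/2}(y)$.

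With your fixed ring $U=B(0,2)\setminus\overline{B(0,1)}$ the argument only closes once $|x-y|$ exceeds a constant $c_0>\diam U=4$. For $a\le |x-y|\le c_0$ you merely assert that the factor $(a\wedge 1)^{-\alpha/2}$ ``absorbs'' this range, but this is not a proof; in fact a direct check shows the crude bound $p_\Omega(t,x,y)\le ct/|x-y|^{d+\alpha}$ is not dominated by the right-hand side of (\ref{heatineq}) when $\delta_\Omega(y)\ll a\wedge 1$, so something nontrivial is required. Likewise, the proposed ``semigroup bootstrap'' for $p_U(t,y,x)$ with $t<1$ and $x,y\in U$ is not carried out and is essentially as hard as the statement being proved (intrinsic ultracontractivity, Lemma~\ref{intrinsic}, is only available for $t\ge T$, and splitting $\int_U p_U(t/2,y,z)p_U(t/2,z,x)\,dz$ according to whether $|z-x|$ or $|z-y|$ is large loses the boundary decay in $y$ on one of the two pieces). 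Both difficulties disappear simultaneously once the ring is scaled to size $b=(a\wedge 1)/8$: there is then no interior term, and the short-distance regime is covered uniformly. I would also note that the paper works with $\int_{B(x,s)}p_\Omega(t,z,y)\,dz$ and passes to the limit $s\to 0$ rather than manipulating the density directly; this is a minor technical safeguard, but the decisive difference from your attempt is the $a$-dependent localization.
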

\begin{proof}
 The proof of this proposition is very similar to the proof of Theorem 4.2 in \cite{KS}.
 We will assume that $w = 0$, so that $\Omega = (\overline{B(0,1)})^c$. We have
$$
p_{\Omega}(t,x,y) \le p(t,x,y) \le \frac{c t}{|x - y|^{d + \alpha}}.
$$
Thus  for $y$ such that $\delta_{\Omega}(y) \ge (a \wedge 1)/8$ the proposition holds trivially.
From now we suppose that  $\delta_{\Omega}(y) < (a \wedge 1)/8$. Let us also assume that $y = (|y|,0,\ldots,0)$. Consider the  ring $R = B(p,2 b) \setminus \overline{B(p,b)}$, where $p = (1 - b,0\ldots,0)$ and $b = (a \wedge 1)/8$. Note that $\delta_{\Omega}(y) = \delta_{B(p, b)}(y)$. 

In order to show (\ref{heatineq}) we will 
estimate the integral of $p_{\Omega}(t,z,y)$ over the 
smaller  ball $B(x,s)$, $s < b$.  We will then 
differentiate this quantity by dividing by the 
volume and taking the limit as $s$ tends to $0$. 
First observe that  $B(x,s) \subset R^c$. 
We have
\begin{eqnarray*}
\int_{B(x,s)} p_{\Omega}(t,z,y) \, dz &=&
P^y(X(t) \in B(x,s), \tau_D > t) \\
&\le& P^y(\tau_R < t, X(\tau_R) \in \Omega \setminus R, X(t) \in B(x,s)).
\end{eqnarray*}
By the strong Markov property the last expression equals
\begin{equation}
\label{tauR}
E^y\left[\tau_R < t, X(\tau_R) \in \Omega \setminus R; \, P^{X(\tau_R)}(X(t - r) \in B(x,s))|_{r = \tau_R}\right].
\end{equation}
Let $A = B(x,|x - y|/4)$. Note that $A \subset R^c$. We will divide the set $\Omega \setminus R$ into two subsets $A \cap \Omega$ and $F = \Omega \setminus (A \cup R)$. Observe that  
\begin{eqnarray*}
&& E^y\left[\tau_R < t, X(\tau_R) \in F; \, P^{X(\tau_R)}(X(t - r) \in B(x,s))|_{r = \tau_R}\right] \\
&=& E^y\left[\tau_R < t, X(\tau_R) \in F; \, \int_{B(x,s)} p(t - \tau_R, X(\tau_R),z) \, dz\right].
\end{eqnarray*}
Note also that $X(\tau_R) \in F$, so for $z \in B(x,s)$, $s < b \le |x - y|/8$ we have $|X(\tau_R) - z| \ge |x - y|/8$. By (\ref{ptxy}) this is bounded above by 
$$
c P^y(X(\tau_R) \in F) \frac{t|B(x,s)|}{|x - y|^{d + \alpha}}.
$$
By Lemma \ref{ring} and the fact that $\delta_{\Omega}(y) = \delta_{B(p,b)}(y)$ this is bounded above by
$$
\frac{c t \delta_{\Omega}^{\alpha/2}(y) |B(x,s)|}{b^{\alpha/2} |x - y|^{d + \alpha}}.
$$

Now let us estimate the part of (\ref{tauR}) corresponding 
to the set $A \cap \Omega$. By the ``space-time"
Ikeda-Watanabe formula stated above, (Proposition \ref{IWprop}), we have
\begin{eqnarray}
&& E^y\left[\tau_R < t, X(\tau_R) \in A \cap \Omega; \, P^{X(\tau_R)}(X(t - r) \in B(x,s))|_{r = \tau_R}\right]\nonumber \\
 &&=  P^y\left[\tau_R < t, X(\tau_R) \in A \cap \Omega, \, X(t) \in B(x,s) \right]\label{tauR1}\\
 && =  \int_R \int_{0}^{t} p_R(r,y,u)  \int_{A \cap \Omega} \nu(u - v) P^v(X(t - r) \in B(x,s)) \, dv \, dr \, du.\nonumber
\end{eqnarray}
Note that for $u \in R$, $v \in A \cap \Omega$, we have $|u - y| \le 4b \le |x - y|/2$, $|v - x| \le |x - y|/4$.
Thus 
$$
\nu(u - v) \le c |x - y|^{-d - \alpha}, \quad u \in R, \, v \in A \cap \Omega.
$$
We also have
$$
\int_{A \cap \Omega} P^v(X(t - r) \in B(x,s)) \, dv =
\int_{B(x,s)} \int_{A \cap \Omega} p(t - r, v,z) \, dv \, dz \le
|B(x,s)|
$$
and
\begin{eqnarray*}
\int_R \int_{0}^{t} p_R(r,y,u) \, dr \, du 
&\le& \int_R G_R(y,u) \, du = E^y(\tau_R) \\
&\le& c b^{\alpha/2} \delta_{B(p,b)}^{\alpha/2}(y) =
c b^{\alpha/2} \delta_{\Omega}^{\alpha/2}(y).
\end{eqnarray*}
It follows that (\ref{tauR1}) is bounded above by 
$$
\frac{c b^{\alpha/2} \delta_{\Omega}^{\alpha/2}(y) |B(x,s)|}{ |x - y|^{d + \alpha}}.
$$
Recall that $b = (a \wedge 1)/8$. Finally diving both sides by $ |B(x,s)|$ gives 
$$
\frac{1}{|B(x,s)|} \int_{B(x,s)} p_{\Omega}(t,z,y) \, dz \le
\frac{c (t \vee 1) \delta_{\Omega}^{\alpha/2}(y)}{b^{\alpha/2} |x - y|^{d + \alpha}}.
$$
Letting $s \to 0$ we get the assertion of the proposition.
\end{proof}

An immediate corollary of the above result is 
\begin{cor}
\label{heatcor}
Let $D \subset \Rd$, $d \ge 2$, be an open set satisfying the uniform outer ball condition of radius $1$. 
There exists a constant $c$ such that for any $t > 0$, $x ,y \in D$ with $|x - y| \ge a > 0$, we have
$$
p_{D}(t,x,y) \le \frac{c (t \vee 1) \delta_{D}^{\alpha/2}(y)}{(a \wedge 1)^{\alpha/2} |x - y|^{d + \alpha}}.
$$
\end{cor}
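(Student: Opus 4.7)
The plan is to read this as an immediate domain-monotonicity consequence of Proposition \ref{heat}, following exactly the template used in the proof of Corollary \ref{Green1}. Fix $x,y \in D$ with $|x-y| \ge a$. Since $D^c$ is closed, I can pick $y_* \in \partial D$ realizing $|y - y_*| = \delta_D(y)$; the uniform outer ball condition of radius $1$ then supplies a ball $B(w,1) \subset D^c$ with $\partial B(w,1) \cap \partial D = \{y_*\}$. Set $\Omega = (\overline{B(w,1)})^c$, so that $\Omega$ is exactly the set for which Proposition \ref{heat} was proved.

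Next I would check that $D \subset \Omega$. Indeed $B(w,1) \subset D^c$, and since $D$ is open it cannot contain the boundary point $y_*$, so $D \cap \overline{B(w,1)} = \emptyset$, i.e.\ $D \subset \Omega$. Standard monotonicity of the killed transition density in the domain then gives $p_D(t,x,y) \le p_\Omega(t,x,y)$ for all $t>0$ and all $x,y \in D$. Since the separation hypothesis $|x-y|\ge a$ is unchanged, Proposition \ref{heat} applied to $\Omega$ immediately yields
$$
p_\Omega(t,x,y) \le \frac{c (t \vee 1) \delta_{\Omega}^{\alpha/2}(y)}{(a \wedge 1)^{\alpha/2} |x - y|^{d + \alpha}}.
$$

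The last ingredient is the comparison $\delta_\Omega(y) \le \delta_D(y)$. Because $y_* \in \partial B(w,1) = \partial \Omega$, one has $\delta_\Omega(y) \le |y - y_*| = \delta_D(y)$; equivalently, $\delta_\Omega(y) = |y-w|-1 \le (|y-y_*|+|y_*-w|)-1 = \delta_D(y)$. Substituting this into the previous display gives precisely the bound claimed. I expect no serious obstacle in this argument: every ingredient (the outer tangent ball, domain monotonicity of $p_D$, and the distance comparison) is routine, which is why the author labels the statement an immediate corollary.
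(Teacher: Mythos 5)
Your proof is correct and matches exactly the approach the paper intends: the paper supplies no explicit proof (labeling the corollary ``immediate''), but the same two-step recipe—pick the exterior tangent ball at the nearest boundary point to $y$, use domain monotonicity $p_D \le p_\Omega$, then shrink $\delta_\Omega(y)$ to $\delta_D(y)$—is precisely the template spelled out for Corollary \ref{Green1}. All three ingredients you identify (the outer tangent ball, $p_D \le p_\Omega$, and $\delta_\Omega(y) \le |y - y_*| = \delta_D(y)$) are correctly justified, so there is nothing to add.
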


\begin{prop}
\label{heat1}
Let $\Omega = (\overline{B(w,1)})^c$, $w \in \Rd$, $d \ge 2$ and $0 < S < T < \infty$. There exists a constant $c_{S,T}$ (depending on $S$, $T$, $d$, $\alpha$) such that for any $t \in [S,T]$ we have
$$
p_{\Omega}(t,x,y) \le c_{S,T} \delta_{\Omega}^{\alpha/2}(y), \quad x,y \in \Omega.
$$
\end{prop}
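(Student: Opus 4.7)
The plan is to combine Proposition \ref{heat} with the intrinsic ultracontractivity argument behind Lemma \ref{intrinsic}, joined by Proposition \ref{pFpD}. Take $w=0$ for convenience and fix $t\in[S,T]$. The difficulty is that Proposition \ref{heat} needs $|x-y|$ bounded below and Lemma \ref{intrinsic} is stated for a bounded ring rather than the unbounded $\Omega$, so we must glue these two ingredients together.

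If $\delta_\Omega(y)\ge 1$, the estimate is immediate from $p_\Omega(t,x,y)\le p(t,x,y)\le c\,t^{-d/\alpha}\le c\,S^{-d/\alpha}$ in (\ref{ptxy}) together with $\delta_\Omega^{\alpha/2}(y)\ge 1$. So assume $\delta_\Omega(y)<1$, i.e.\ $1<|y|<2$. I would introduce the auxiliary bounded ring $\Omega':=B(0,3)\setminus\overline{B(0,1)}\subset\Omega$ and apply Proposition \ref{pFpD} with $F=\Omega$, $D=\Omega'$, observing that $X(\tau_{\Omega'})\in\{|z|\ge 3\}$ almost surely (since the inner sphere $\partial B(0,1)$ is not hit), to obtain
$$p_\Omega(t,x,y)=p_{\Omega'}(t,x,y)+E^x\bigl(\tau_{\Omega'}<t,\,|X(\tau_{\Omega'})|\ge 3;\,p_\Omega(t-\tau_{\Omega'},X(\tau_{\Omega'}),y)\bigr).$$

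On the event in the expectation, $|X(\tau_{\Omega'})-y|\ge 3-|y|>1$ and $t-\tau_{\Omega'}\le T$, so Proposition \ref{heat} (with $a=1$) gives $p_\Omega(t-\tau_{\Omega'},X(\tau_{\Omega'}),y)\le c_T\,\delta_\Omega^{\alpha/2}(y)$; hence the whole expectation is bounded by $c_T\,\delta_\Omega^{\alpha/2}(y)$. For the first term, $\Omega'$ is a bounded $C^{1,1}$ domain, so the argument of Lemma \ref{intrinsic} applies verbatim (the intrinsic ultracontractivity results from \cite{CS2} cited there hold for every bounded $C^{1,1}$ domain) and yields $p_{\Omega'}(t,x,y)\le C_S\,\delta_{\Omega'}^{\alpha/2}(x)\,\delta_{\Omega'}^{\alpha/2}(y)$ for $t\ge S$; using $\delta_{\Omega'}(x)\le\diam(\Omega')$ and $\delta_{\Omega'}(y)\le\delta_\Omega(y)$, this is at most $C_S'\,\delta_\Omega^{\alpha/2}(y)$. (If $x\notin\Omega'$, then $p_{\Omega'}(t,x,y)=0$, so the bound is trivial.) Adding the two contributions gives $p_\Omega(t,x,y)\le c_{S,T}\,\delta_\Omega^{\alpha/2}(y)$.

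The main obstacle to address in the write-up is verifying that the intrinsic ultracontractivity step carries over from the specific ring of Lemma \ref{intrinsic} to $\Omega'$; but since its proof only invokes general $C^{1,1}$ results of \cite{CS2}, the transfer is automatic. The choice of the radius $3$ is cosmetic: any radius strictly greater than $2$ works, since all we need is that, starting from the region $1<|y|<2$, the process cannot exit the auxiliary ring without landing at distance at least some fixed positive number from $y$.
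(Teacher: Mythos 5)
Your argument is essentially the paper's own proof: split $p_\Omega$ via Proposition \ref{pFpD} with an auxiliary annulus, bound the annulus contribution by intrinsic ultracontractivity (Lemma \ref{intrinsic}) and the exit contribution by Proposition \ref{heat} using that the exit point is at bounded distance from $y$. The only differences are cosmetic -- you use outer radius $3$ and threshold $\delta_\Omega(y)<1$ where the paper uses the ring $B(0,2)\setminus\overline{B(0,1)}$ and threshold $\delta_\Omega(y)<1/2$ -- and both choices work.
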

\begin{proof}
We assume that $w = 0$. We have
$$
p_{\Omega}(t,x,y) \le p(t,x,y) \le c t^{-d/\alpha},
$$
so when $\delta_{\Omega}(y) \ge 1/2$ the proposition holds trivially. 
Thus  we may assume that $\delta_{\Omega}(y) < 1/2$. Let $R = B(0,2) \setminus \overline{B(0,1)}$. By Proposition \ref{pFpD} $p_{\Omega}(t,x,y)$, equals 
$$
p_{R}(t,x,y) + E^x(\tau_R < t, X(\tau_R) \in \Omega \setminus 
R \setminus D; p_{\Omega}(t - \tau_R, X(\tau_R),y)).
$$
By Lemma \ref{intrinsic} and Lemma \ref{ring} we obtain 
$$
p_{R}(t,x,y) \le c_S \delta_R^{\alpha/2}(y) = c_S \delta_{\Omega}^{\alpha/2}(y).
$$
Since $\delta_{\Omega}(y) < 1/2$ and $|X(\tau_R)| \ge 2$, we see that 
$|X(\tau_R) - y| \ge 1/2$. By Proposition \ref{heat} we obtain
$$
p_{\Omega}(t - \tau_R, X(\tau_R),y) \le c T \delta_{\Omega}^{\alpha/2}(y),
$$
and the proposition follows.
\end{proof}

\begin{cor}
\label{heat1cor}
Let $D \subset \Rd$, $d \ge 2$ be an open set satisfying the uniform outer ball property with radius $1$. Let $0 < S < T < \infty$. Then there exists a constant $c_{S,T}$ (depending on $S$, $T$, $d$, $\alpha$) such that for any $t \in [S,T]$ we have
$$
p_{D}(t,x,y) \le c_{S,T} \delta_{D}^{\alpha/2}(y), \quad x,y \in D.
$$ 
\end{cor}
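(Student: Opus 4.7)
The plan is to deduce this corollary from Proposition \ref{heat1} in exactly the same way that Corollary \ref{Green1} was obtained from Lemma \ref{Green}: trade the set $D$ for the complement of an outer tangent ball, invoke monotonicity of the killed heat kernel in the domain, and check that the relevant distance function does not grow when we pass to the larger set.

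Concretely, fix $x, y \in D$ and choose $y_* \in \partial D$ realizing $|y - y_*| = \delta_D(y)$. By the uniform outer ball property there is a ball $B(w,1) \subset D^c$ with $\partial D \cap \partial B(w,1) = \{y_*\}$. Since $B(w,1) \subset D^c$ and $D$ is open, no point of $\partial B(w,1)$ can lie in $D$, so in fact $\overline{B(w,1)} \cap D = \emptyset$ and therefore $D \subset \Omega := (\overline{B(w,1)})^c$. The standard domain monotonicity gives $p_D(t,x,y) \le p_\Omega(t,x,y)$, and Proposition \ref{heat1} applied to $\Omega$ yields $p_\Omega(t,x,y) \le c_{S,T}\, \delta_\Omega^{\alpha/2}(y)$ for $t \in [S,T]$.

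It remains to compare $\delta_\Omega(y)$ with $\delta_D(y)$. Since $y \notin \overline{B(w,1)}$, we have $\delta_\Omega(y) = |y-w| - 1$, and by the triangle inequality together with $|y_* - w| = 1$,
$$
|y - w| \le |y - y_*| + |y_* - w| = \delta_D(y) + 1,
$$
so $\delta_\Omega(y) \le \delta_D(y)$. Substituting back yields $p_D(t,x,y) \le c_{S,T}\, \delta_D^{\alpha/2}(y)$, as desired.

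There is no real obstacle here: the whole argument is a routine combination of Proposition \ref{heat1} with the outer ball property, and the only mild geometric point is the triangle-inequality bound $\delta_\Omega(y) \le \delta_D(y)$, which works precisely because the outer tangent ball has radius $1$ and its boundary passes through a point of $\partial D$.
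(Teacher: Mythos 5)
Your proof is correct, and it is exactly the argument the paper has in mind: the corollary is stated without proof because it follows from Proposition \ref{heat1} by the same domain-monotonicity-plus-outer-tangent-ball argument used to derive Corollary \ref{Green1} from Lemma \ref{Green}. (In fact your triangle-inequality bound $\delta_\Omega(y) \le \delta_D(y)$ is an equality, since $D \subset \Omega$ also forces the reverse inequality, but the one direction you prove is all that is needed.)
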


We will need some facts concerning the ``stability" of surface area of the boundary open sets with $R$-smooth boundary under certain perturbations.  
The following lemma is proved by van den Berg in  \cite{B}.
\begin{lem}[\cite{B}, Lemma 5]
\label{Dq}
Let $D$ be an open bounded set in $\Rd$ with 
$R$-smooth boundary $\partial D$ and define for $0 \le q < R$
$$
D_q = \{x \in D: \, \delta_{D}(x) > q \}
$$
and denote the area of its boundary $\partial D_q$ by $|\partial D_q|$. Then
\begin{equation}
\label{Dqi}
\left(\frac{R - q}{R}\right)^{d - 1} |\partial D| \le
|\partial D_q| 
\le \left(\frac{R}{R - q}\right)^{d - 1} |\partial D|, 
\quad 0 \le q < R. 
\end{equation}
\end{lem}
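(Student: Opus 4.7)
The strategy is to realise $\partial D_q$ as the inner parallel hypersurface of $\partial D$ at distance $q$ and to quantify the resulting change of surface area by a Jacobian computation; the $R$-smoothness condition will enter as a bound on principal curvatures.

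First I would argue that $\partial D$ is of class $C^{1,1}$ with principal curvatures $\kappa_1(x_0),\dots,\kappa_{d-1}(x_0)$ satisfying $|\kappa_i(x_0)|\le 1/R$. At every $x_0\in\partial D$ the two osculating balls $B_1\subset D$ and $B_2\subset\Rd\setminus (D\cup\partial D)$ of common radius $R$ tangent at $x_0$ pin down a unique outward unit normal $\mathbf n(x_0)$ and sandwich $\partial D$ between two spheres of radius $R$; this sandwiching forces $\mathbf n$ to be Lipschitz in $x_0$ with Lipschitz constant $1/R$, so by Rademacher's theorem $\mathbf n$ is differentiable $\mathcal H^{d-1}$-a.e., and a direct comparison with the two spheres gives the stated curvature bound.

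Next I would consider the parallel map $\Phi_q\colon \partial D\to \Rd$, $\Phi_q(x_0)=x_0-q\,\mathbf n(x_0)$, for $0\le q<R$. The inner osculating ball of radius $R$ at $x_0$ implies $\Phi_q(x_0)\in D$ with $\dist(\Phi_q(x_0),\partial D)=q$, so $\Phi_q(\partial D)\subset\partial D_q$; conversely, for $q<R$ the $R$-smoothness guarantees that every $y\in\partial D_q$ has a unique nearest boundary point $x_0\in\partial D$, with $y=\Phi_q(x_0)$. Working in a principal frame at a point of differentiability of $\mathbf n$, the tangential derivative of $\Phi_q$ has eigenvalues $1-q\kappa_i(x_0)$, hence its $(d-1)$-dimensional Jacobian equals
$$
J\Phi_q(x_0)=\prod_{i=1}^{d-1}\bigl(1-q\kappa_i(x_0)\bigr),
$$
and the curvature bound plus $0\le q<R$ give $\bigl(\tfrac{R-q}{R}\bigr)^{d-1}\le J\Phi_q(x_0)\le \bigl(\tfrac{R+q}{R}\bigr)^{d-1}$.

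The area formula $|\partial D_q|=\int_{\partial D}J\Phi_q\,d\mathcal H^{d-1}$ and integration of the pointwise bounds then yield
$$
\Bigl(\tfrac{R-q}{R}\Bigr)^{d-1}|\partial D|\;\le\;|\partial D_q|\;\le\;\Bigl(\tfrac{R+q}{R}\Bigr)^{d-1}|\partial D|.
$$
The lower bound is precisely the left half of (\ref{Dqi}); the right half follows because $(R+q)/R\le R/(R-q)$ for $0\le q<R$ (their product is $1-q^2/R^2\le 1$), so the upper bound we obtained is stronger than the stated one. The main technical obstacle is justifying the area formula at the merely $C^{1,1}$ regularity level: one can either invoke the area formula for Lipschitz maps directly, combined with Rademacher's theorem for $\mathbf n$ on the rectifiable set $\partial D$, or approximate $D$ by smooth domains obtained as sublevel sets of a mollified signed distance function—whose principal curvatures remain uniformly bounded by $1/R$—and pass to the limit using weak continuity of $|\partial D_q|$ in such families.
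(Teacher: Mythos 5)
Your proof is correct. The paper does not actually give a proof of this lemma: it cites van den Berg (\cite{B}, Lemma 5) and only remarks that his formulation for open bounded \emph{regions} extends to open bounded \emph{sets} with $R$-smooth boundary. Since your parallel-hypersurface argument is entirely local and never invokes connectedness of $D$, it handles that extension automatically.

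Your route---deduce $C^{1,1}$ regularity and a Lipschitz outward normal $\mathbf n$ from the two-sided ball condition, observe that the inner ball condition gives reach at least $R$ so that for $q<R$ the map $\Phi_q=\mathrm{id}-q\mathbf n$ is a bijection onto $\partial D_q$, compute the tangential Jacobian $\prod_{i=1}^{d-1}(1-q\kappa_i)$ at points of differentiability, and bound it using $|\kappa_i|\le 1/R$---is the standard geometric argument and is essentially the one underlying van den Berg's Lemma 5, so you are not taking a genuinely different route from the source being cited. Two small remarks. First, you actually obtain the sharper upper bound $\bigl(\tfrac{R+q}{R}\bigr)^{d-1}|\partial D|$, which as you note dominates the stated $\bigl(\tfrac{R}{R-q}\bigr)^{d-1}|\partial D|$ since $(R+q)(R-q)\le R^2$; stating the weaker form is presumably cosmetic, to mirror the lower bound. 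Second, the technical issues you flag at the end (Rademacher on the rectifiable set $\partial D$, the area formula for Lipschitz maps between rectifiable sets, or alternatively mollifying the signed distance function) are genuine but standard; they do not constitute a gap, and your honesty in flagging them is appropriate.
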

This lemma is formulated in \cite{B} for open bounded regions but it follows easily that it holds for all open bounded sets. Using this lemma we obtain the following result.
\begin{cor}
\label{partialDq}
Let $D$ be an open bounded set in $\Rd$ with $R$-smooth boundary. For any $0 < q \le R/2$ we have
\begin{itemize}
\item[(i)]
$$
2^{-d + 1} |\partial D| \le |\partial D_q| \le 2^{d - 1} |\partial D|,
$$
\item[(ii)]
$$
|\partial D| \le \frac{2^d |D|}{R},
$$
\item[(iii)]
$$
\left| |\partial D_q| - |\partial D| \right|
\le \frac{2^{d} d q |\partial D|}{R} 
\le \frac{2^{2 d} d q |D|}{R^2}. 
$$
\end{itemize}
\end{cor}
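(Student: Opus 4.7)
The plan is to derive all three parts of Corollary \ref{partialDq} directly from the two-sided estimate provided by Lemma \ref{Dq}, namely
\[
\left(\frac{R-q}{R}\right)^{d-1} |\partial D| \le |\partial D_q| \le \left(\frac{R}{R-q}\right)^{d-1} |\partial D|, \qquad 0 \le q < R.
\]
The only nontrivial input beyond elementary estimates is a coarea identity for the distance function, which is used only in part (ii).

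For part (i), I would simply note that $0 < q \le R/2$ implies $R - q \ge R/2$, so $(R-q)/R \ge 1/2$ and $R/(R-q) \le 2$. Raising these to the $(d-1)$-th power and multiplying by $|\partial D|$ gives the claim immediately.

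For part (ii), I would invoke the coarea formula for the $1$-Lipschitz function $\delta_D$: since $|\nabla \delta_D|=1$ almost everywhere on $D$, one has
\[
|D| \;=\; \int_D |\nabla \delta_D(x)|\,dx \;=\; \int_0^{\infty} |\partial D_q|\, dq.
\]
(The $R$-smoothness makes $\delta_D$ of class $C^{1,1}$ in an $R$-neighborhood of $\partial D$, so this identity is standard in that range; alternatively one can parametrize the tubular neighborhood by the inward unit normal and compute the Jacobian directly.) Restricting the integral to $q\in[0,R/2]$ and using the lower bound in part (i) yields $|D|\ge (R/2)\cdot 2^{-(d-1)}|\partial D| = R|\partial D|/2^d$, which rearranges to the desired inequality.

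For part (iii), I would bound the two multiplicative errors $((R/(R-q))^{d-1}-1)$ and $(1-((R-q)/R)^{d-1})$ by integrating their derivatives on $[0,q]$ with $q \le R/2$. Since $\frac{d}{ds}(R/(R-s))^{d-1} = (d-1)R^{d-1}/(R-s)^d$ and $R-s \ge R/2$ throughout the range, the first error is at most $(d-1) R^{d-1} \cdot q \cdot (2/R)^d = 2^d(d-1)q/R$. The second error is bounded even more easily by $(d-1)q/R$ using $(R-s)^{d-2}\le R^{d-2}$. In both cases we are below $2^d d\, q/R$, so combining with Lemma \ref{Dq} gives the first inequality in (iii); substituting the bound from part (ii) converts $|\partial D|$ into $|D|/R$ and produces the second.

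The main (and essentially only) obstacle is the coarea identity in (ii); everything else reduces to elementary calculus estimates on the scalar functions $s\mapsto ((R\pm s)/R)^{d-1}$ on the interval $[0,R/2]$.
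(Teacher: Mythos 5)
Your proposal is correct and follows essentially the same route as the paper: part (i) from the two-sided bound in Lemma \ref{Dq} with $R-q \ge R/2$; part (ii) from the coarea identity $|D\setminus D_{R/2}| = \int_0^{R/2}|\partial D_q|\,dq$ combined with the lower bound of (i); and part (iii) by differentiating the two ratio functions and using the mean value theorem (your fundamental-theorem-of-calculus phrasing is equivalent), with exactly the derivative bound $2^d d/R$ on $(0,R/2]$. The only cosmetic difference is that you spell out the full coarea formula $|D|=\int_0^\infty|\partial D_q|\,dq$ via $|\nabla\delta_D|=1$ a.e., whereas the paper simply asserts the restricted shell identity.
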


\begin{proof}
(i) follows directly from (\ref{Dqi}) under our restriction on $q$. 
By (i) we obtain
$$
|D| \ge |D \setminus D_{R/2}| = \int_{0}^{R/2} |\partial D_q| \, dq 
\ge 2^{-d} |\partial D| R,
$$
which gives (ii). 

By (\ref{Dqi}) we get 
\begin{equation*}
\left(\left(\frac{R - q}{R}\right)^{d - 1} - 1 \right) |\partial D| \le
|\partial D_q| - |\partial D| 
\le  \left(\left(\frac{R}{R - q}\right)^{d - 1} - 1 \right) |\partial D|.
\end{equation*}
Now (iii) follows from the mean value theorem and the fact that the derivatives of both $(\frac{R}{R - q})^{d - 1}$ and $(\frac{R - q}{R})^{d - 1}$ with respect to $q \in (0,R/2]$ are bounded by $2^d d R^{-1} $.
%For (iii), we use the elementary inequality $|(1 + x)^n - 1| \le |x| 2^n$ 
%valid for for $|x| \le 1$ and all $n \in \N$ which follows from 
%$$
%(1 + x)^n - 1 = x ((1 + x)^{n - 1} + (1 + x)^{n - 2} + \ldots + 1).
%$$
% It follows that for $u \in (0,R/2]$ we have
%$$
% \left(\frac{R}{R - u}\right)^{d - 1} - 1 = 
% \left(1 + \frac{u}{R - u}\right)^{d - 1} - 1 \le
%\frac{2^{d - 1} u}{R - u} \le 
%\frac{2^d u}{R}
%$$
%and
%$$
%1 - \left(\frac{R - u}{R}\right)^{d - 1} = 
%1 - \left(1 - \frac{u}{R}\right)^{d - 1} \le
%\frac{2^{d - 1} u}{R}.
%$$
%Multiplying these by $\partial D_q$ and $\partial D$ and applying  (\ref{Dqi}), (iii) follows.
\end{proof}

\section{Proof of main result} \label{sec:Proof}

\begin{proof}[Proof of Theorem \ref{mainthm}]

We begin by observing that for  $t^{1/\alpha} > R/2$, the theorem holds trivially. Indeed for such $t's$ we have
$$
Z_D(t) \le \int_D p(t,x,x) \, dx \le \frac{c |D|}{t^{d/\alpha}} \le 
 \frac{c |D| t^{2/\alpha}}{R^2 t^{d/\alpha}}.
$$
By Corollary \ref{partialDq} (ii) we also have
$$
\frac{C_2 |\partial D| t^{1/\alpha}}{t^{d/\alpha}} \le
\frac{2^d C_2 |D| t^{1/\alpha}}{R t^{d/\alpha}} \le
\frac{2^{d + 1} C_2 |D| t^{2/\alpha}}{R^2 t^{d/\alpha}}.
$$
Therefore for $t^{1/\alpha} > R/2$ (\ref{main}) follows.

From now on we shall assume that $t^{1/\alpha} \le R/2$. From (\ref{PDformula}) and the fact that 
$p(t, x, x)=\frac{1}{t^{d/\alpha}}p_1(0)$, we see that 
\begin{eqnarray}
\nonumber
Z_D(t) - \frac{C_1 |D|}{t^{d/\alpha}} &=&
\int_{D} p_D(t,x,x) \, dx - \int_{D} p(t,x,x) \, dx \\
\label{main1}
&=& - \int_{D} r_D(t,x,x) \, dx,
\end{eqnarray}
where $C_1=p_1(0)$ as stated in the theorem.  Therefore we must estimate (\ref{main1}). 
We will use the notation of Lemma \ref{Dq}. We break our domain into 
%tk tow pieces, $D_{R/2}$
two pieces, $D_{R/2}$  
and its complement.   We will first deal with the contribution in $D_{R/2}$. 
\bigskip

\noindent {\bf Claim I:}
 
\begin{equation}
\label{main2}
\int_{D_{R/2}} r_D(t,x,x) \, dx \le \frac{c |D| t^{2/\alpha}}{R^2 t^{d/\alpha}}, 
\end{equation}
for $t^{1/\alpha} \le R/2$. 
To verify this, observe that by scaling the left hand side of (\ref{main2}) equals 
\begin{equation}
\label{D-DR2}
\frac{1}{t^{d/\alpha}} \int_{D_{R/2}} r_{D/t^{1/\alpha}} \left(1,\frac{x}{t^{1/\alpha}},\frac{x}{t^{1/\alpha}}\right) \, dx.
\end{equation}
For $x \in D_{R/2}$ we  have $\delta_{D/t^{1/\alpha}}(x/t^{1/\alpha}) \ge R/(2 t^{1/\alpha}) \ge 1$. It follows by Lemma \ref{rDestimate} that 
$$
r_{D/t^{1/\alpha}} \left(1,\frac{x}{t^{1/\alpha}},\frac{x}{t^{1/\alpha}}\right)
\le \frac{c}{\delta_{D/t^{1/\alpha}}^{d + \alpha}(x/t^{1/\alpha})}
\le \frac{c}{\delta_{D/t^{1/\alpha}}^{2}(x/t^{1/\alpha})}
\le \frac{c t^{2/\alpha}}{R^2}.
$$
Hence (\ref{D-DR2}) is bounded by $c|D|t^{2/\alpha}/(R^2t^{d/\alpha})$, which gives (\ref{main2}). 

 Now let us introduce the following notation. Since $D$ has $R$-smooth
boundary, for any point $y \in \partial D$ there are two open balls
$B_1$ and $B_2$ both of radius $R$ such that $B_1 \subset D$, $B_2 \subset
\Rd \setminus (D \cup \partial D)$, $\partial B_1 \cap \partial B_2 =
y$. For any $x \in D_{R/2}$ there exists a unique point $x_* \in
\partial D$ such that $\delta_D(x) = |x - x_*|$. Let $B_1 = B(z_1,R)$,
$B_2 = B(z_2,R)$ be the balls for the point $x_*$. Let $H(x)$ be the
half-space containing $B_1$ such that $\partial H(x)$ contains $x_*$ and
is perpendicular to the segment  $\overline{z_1 z_2}$.

The next proposition asserts that for small $t$, the quantity $r_D(t,x,x)$ 
can be replaced by  $r_{H(x)}(t,x,x)$. This  is a crucial
step in the proof of Theorem \ref{mainthm}. The proof is fairly 
long and technical and is deferred to 
%tk after the 
after the proof of Theorem \ref{mainthm}.

\begin{prop}
\label{rHrD}
Let $D \subset \Rd$, $d \ge 2$, be an open bounded set with $R$-smooth boundary $\partial D$. Then for any $x \in D \setminus D_{R/2}$ and $t > 0$ such that $t^{1/\alpha} \le R/2$ we have
\begin{equation}
\label{rHrDf}
|r_D(t,x,x) - r_{H(x)}(t,x,x)|
\le \frac{c t^{1/\alpha}}{R t^{d/\alpha}}\left(\left(\frac{t^{1/\alpha}}{\delta_D(x)}\right)^{d + \alpha/2 - 1} \wedge 1\right).
\end{equation} 
\end{prop}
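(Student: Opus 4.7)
The plan is first to use the scaling relation (\ref{scalingr}) to reduce to $t = 1$: after rescaling $x$, $D$, and $R$ by $t^{-1/\alpha}$, the hypothesis $t^{1/\alpha} \le R/2$ becomes $R \ge 2$, and (\ref{rHrDf}) reduces to
$$
|r_D(1, x, x) - r_{H(x)}(1, x, x)| \le \frac{c}{R}\bigl(\delta_D(x)^{-(d + \alpha/2 - 1)} \wedge 1\bigr).
$$
By (\ref{PDformula}) this difference equals $p_{H(x)}(1,x,x) - p_D(1,x,x)$. Since neither $D$ nor $H(x)$ contains the other, I would take $F := D \cup H(x)$ as a common superset; Proposition \ref{pFpD}, applied in turn to the pairs $(D, F)$ and $(H(x), F)$, yields the two non-negative identities
\begin{align*}
(p_F - p_D)(1,x,x) &= E^x\bigl[\tau_D < 1,\, X(\tau_D) \in H(x) \setminus D;\, p_F(1 - \tau_D, X(\tau_D), x)\bigr], \\
(p_F - p_{H(x)})(1,x,x) &= E^x\bigl[\tau_{H(x)} < 1,\, X(\tau_{H(x)}) \in D \setminus H(x);\, p_F(1 - \tau_{H(x)}, X(\tau_{H(x)}), x)\bigr].
\end{align*}
Their difference is $p_{H(x)} - p_D$, so the triangle inequality gives
$$
|r_D(1,x,x) - r_{H(x)}(1,x,x)| \le (p_F - p_D)(1,x,x) + (p_F - p_{H(x)})(1,x,x),
$$
and by the analogous treatment of the second summand it suffices to bound the first.

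The crucial geometric input is the following consequence of $R$-smoothness: in coordinates with $x_* = 0$, $\partial H(x) = \{z_1 = 0\}$, and $x = (\delta_D(x), 0, \ldots, 0)$, the boundary $\partial D$ is locally the graph of a function $z_1 = f(z')$ squeezed between the two tangent spheres of radius $R$, so that $|f(z')| \le c|z'|^2/R$ for $|z'| \le R$. In particular,
$$
\bigl|(H(x) \triangle D) \cap B(x_*, \rho)\bigr| \le c\,\rho^{d+1}/R \qquad\text{for } 0 < \rho \le R,
$$
which supplies the desired $1/R$ prefactor. I would next use Proposition \ref{IWprop} to rewrite
$$
(p_F - p_D)(1,x,x) = \int_0^1 \! ds \int_D p_D(s, x, y)\, dy \int_{H(x) \setminus D} \nu(y - z)\, p_F(1-s, z, x)\, dz,
$$
and bound the integrand using $p_F \le p$ from (\ref{ptxy}), $\nu(y-z) \le c|y-z|^{-d-\alpha}$, and the boundary-sensitive kernel estimate
$$
p_D(s,x,y) \le \frac{c\,\delta_D^{\alpha/2}(y)}{(|x-y|\wedge 1)^{\alpha/2}\,|x-y|^{d+\alpha}}
$$
from Corollary \ref{heatcor}; the factor $\delta_D^{\alpha/2}(y)$ is essential when the jump-landing point $y$ approaches $\partial D$.

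The remaining task is to carry out the $(s,y,z)$ integration after splitting the $z$ integral into a near zone $\{|z - x_*| \le 1\}$ and a far zone $\{|z - x_*| > 1\}$. In the near zone, the sliver-volume estimate directly yields a contribution of order $c/R$, with the short-range singularities of $p_D$ and $\nu$ absorbed by $\delta_D^{\alpha/2}(y)$. In the far zone, the polynomial decays of $\nu$ and $p$ force $|y - z|$ and $|z - x|$ to be comparable to $|z - x_*|$; integrating the $(d-1)$ tangential directions over the sliver and using $\delta_D(x) \le R/2$ produces the additional decay factor $\delta_D(x)^{-(d + \alpha/2 - 1)}$, the exponent reflecting the $(d-1)$-dimensional tangent integration together with the $\alpha/2$ boundary-regularity exponent built into Corollary \ref{heatcor}. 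The main obstacle will be this case analysis: since $\nu$ and $p$ share the same $|\cdot|^{-(d+\alpha)}$ decay and $p_D$ has a short-range singularity, one must carefully locate the dominant region of the triple integral and extract the sharp powers of $\delta_D(x)$ and $R^{-1}$ without any loss.
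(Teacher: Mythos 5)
Your framework is plausible but misses the single decisive reduction in the paper's proof and leaves the hard estimates undone. The paper does not work with $D$ and $H(x)$ directly, nor with $F = D \cup H(x)$: its first move is to use monotonicity of $r_A$ in $A$ together with the nested inclusions $B_1 \subset D \subset (\overline{B_2})^c$ and $B_1 \subset H(x) \subset (\overline{B_2})^c$ to sandwich $|r_D(t,x,x) - r_{H(x)}(t,x,x)| \le r_{B_1}(t,x,x) - r_{(\overline{B_2})^c}(t,x,x)$. After rescaling, this reduces the whole problem to a single comparison between the ball $W = B_1/t^{1/\alpha}$ and the complement $U$ of a tangent ball of the same radius $s = R/t^{1/\alpha}$, and it is precisely for that pair that explicit tools exist: the Blumenthal--Getoor--Ray Poisson kernel for exiting $W$, Lemma \ref{Green} and Corollary \ref{Green1} for $G_U$, and a clean polar parameterization of the lens $U \setminus \overline{W}$ through the angle $\beta(\rho)$ that yields the angular estimate (\ref{angle}) and hence the $1/s$ factor. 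Your $F = D \cup H(x)$ route forfeits all of this: $F$ generically has corners where $\partial D$ meets $\partial H(x)$, there is no explicit exit law or Green function bound for $D$ or $F$, and the set $H(x) \setminus D$ is unbounded, coinciding with your ``sliver'' only within distance $O(R)$ of $x_*$; far from $x_*$ it is essentially the whole half-space, so the sliver volume estimate alone does not control that region and your near/far split at $|z-x_*|=1$ does not address it.

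The second gap is that the triple integral you defer to ``carrying out the $(s,y,z)$ integration'' is the actual substance of the argument. The integrand has three competing singularities --- $p_D(s,x,y)$ as $s \to 0$, $y \to x$; $\nu(y-z)$ as $y \to z$ across the boundary; and $p_F(1-s,z,x)$ as $s \to 1$, $z \to x$ --- and the paper handles them by a three-way split of exit points (far: the explicit harmonic measure formula; intermediate: Proposition \ref{heat}; near: Corollary \ref{heat1cor} and the Green function bound), each followed by a careful spherical-coordinate integration. Your one-line account of the exponent $d+\alpha/2-1$ as ``$(d-1)$ tangential directions plus the $\alpha/2$ boundary exponent'' is not a derivation and does not reflect how the exponent actually emerges in the paper (there it appears from, e.g., $|x|^{-2d-\alpha/2}\cdot|x|^{d+1}$ in one regime and $|x|^{\alpha/2}\cdot|x|^{-d-\alpha+1}$ in another). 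As written, the proposal identifies a reasonable direction but does not constitute a proof.
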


Let us assume the proposition and use it to 
estimate the contribution from $D\setminus D_{R/2}$ to the integral of $r_D(t, x, x)$ in (\ref{main1}).
 \bigskip
 
\noindent{\bf Claim II:} 

\begin{equation}
\label{main3}
\left| \int_{D \setminus D_{R/2}} r_D(t,x,x) \, dx - 
\int_{D \setminus D_{R/2}} r_{H(x)}(t,x,x) \, dx \right|
\le \frac{c |D| t^{2/\alpha}}{R^2 t^{d/\alpha}},
\end{equation} 
for $t^{1/\alpha} \le R/2$.
To see this observe that by  Proposition \ref{rHrD} the left hand side of (\ref{main3}) is bounded above by 
$$
\frac{c t^{1/\alpha}}{R t^{d/\alpha}}
\int_0^{R/2} |\partial D_q| \left(\left(\frac{t^{1/\alpha}}{q}\right)^{d + \alpha/2 - 1} \wedge 1\right) \, dq.
$$
By Corollary \ref{partialDq}, (i),  the last quantity is smaller than or equal to 
\begin{equation}
\label{integralt/q}
\frac{c t^{1/\alpha}|\partial D|}{R t^{d/\alpha}}
\int_0^{R/2} \left(\left(\frac{t^{1/\alpha}}{q}\right)^{d + \alpha/2 - 1} \wedge 1\right) \, dq.
\end{equation}
It is easy to show that the integral in (\ref{integralt/q}) is 
bounded above by $c t^{1/\alpha}$. Using this and 
Corollary \ref{partialDq}, (ii), we obtain (\ref{main3}). 

Recall that $H = \{(x_1,\ldots,x_d) \in \Rd: \, x_1 > 0\}$. 
For abbreviation let us denote 
$$
f_H(t,q) = r_H^{(\alpha)}(t,(q,0,\ldots,0),(q,0,\ldots,0)), \quad t,q > 0.
$$
Of course we have $r_{H(x)}(t,x,x) = f_H(t,\delta_{H(x)}(x))$.
 Note also that $f_H(t,q)$ satisfies the following properties
$$
f_H(t,q) = t^{-d/\alpha} f_H(1,q t^{-1/\alpha}), \quad
f_H(1,q) \le c (q^{-d - \alpha} \wedge 1).
$$
In the next step we will show that 
\begin{equation}
\label{main4}
\left| \int_{D \setminus D_{R/2}} r_{H(x)}(t,x,x) \, dx - 
\frac{t^{1/\alpha} |\partial D|}{ t^{d/\alpha}} \int_0^{R/(2 t^{1/\alpha})} f_H(1,q) \, dq \right|
\le \frac{c |D| t^{2/\alpha}}{R^2 t^{d/\alpha}}.
\end{equation}
Note that the constant $C_2$ which appears in the 
formulation of Theorem \ref{mainthm} satisfies $C_2 = \int_0^{\infty} f_H(1,q) \, dq$.

We have
\begin{eqnarray*}
\int_{D \setminus D_{R/2}} r_{H(x)}(t,x,x) \, dx
 &=& \int_0^{R/2} |\partial D_u|f_H(t,u) \, du  \\ 
&=&\frac{1}{ t^{d/\alpha}}  \int_0^{R/2} |\partial D_u| f_H(1,u t^{-1/\alpha}) \, du\\
&=&\frac{t^{1/\alpha}}{ t^{d/\alpha}} \int_0^{R/(2 t^{1/\alpha})} |\partial D_{t^{1/\alpha} q}| f_H(1,q) \, dq,
\end{eqnarray*}
where the second equality follows by  
scaling and the third by the substitution  $q = u t^{-1/\alpha}$.
Hence the left hand side of (\ref{main4}) is bounded  above by 
$$
\frac{t^{1/\alpha}}{ t^{d/\alpha}} \int_0^{R/(2 t^{1/\alpha})} 
\left||\partial D_{t^{1/\alpha} q}| - |\partial D| \right| f_H(1,q) \, dq.
$$
By Corollary \ref{partialDq}, (iii), this is smaller than
\begin{eqnarray*}
&& \frac{c |D| t^{2/\alpha}}{R^2 t^{d/\alpha}} 
\int_0^{R/(2 t^{1/\alpha})} q f_H(1,q) \, dq\\ 
&\le& \frac{c |D| t^{2/\alpha}}{R^2 t^{d/\alpha}} \int_0^{\infty} q (q^{-d - \alpha} \wedge 1)
 \, dq \le \frac{c |D| t^{2/\alpha}}{R^2 t^{d/\alpha}}.
\end{eqnarray*}
This gives (\ref{main4}).
Finally, we have
\begin{equation}
\label{main5}
\left| \frac{t^{1/\alpha} |\partial D|}{ t^{d/\alpha}} \int_0^{R/(2 t^{1/\alpha})} 
f_H(1,q) \, dq  - \frac{t^{1/\alpha} |\partial D|}{ t^{d/\alpha}}
 \int_0^{\infty} f_H(1,q) \, dq\right|
\le \frac{c |D| t^{2/\alpha}}{R^2 t^{d/\alpha}}.
\end{equation}
To see this recall that $R/(2 t^{1/\alpha}) \ge 1$.  So for $q \ge R/(2 t^{1/\alpha})$
 we have $f_H(1,q) \le c q^{-d-\alpha} \le c q^{-2}$. Therefore
 
$$
\int_{R/(2 t^{1/\alpha})}^{\infty} f_H(1,q) \, dq \le 
c \int_{R/(2 t^{1/\alpha})}^{\infty} \frac{dq}{q^2} \le \frac{c t^{1/\alpha}}{R}.
$$
This and Corollary \ref{partialDq}, (ii), gives (\ref{main5}).
Now, (\ref{main1}), (\ref{main2}), (\ref{main3}), 
(\ref{main4}), (\ref{main5}) give (\ref{main}).
\end{proof}

\begin{proof}[Proof of Proposition \ref{rHrD}]

Let $x_* \in \partial D$ be a unique point such that 
$|x - x_*| = \dist(x,\partial D)$ and $B_1$ and $B_2$ be 
the balls with radius $R$ such that $B_1 \subset D$, 
$B_2 \subset \Rd \setminus (D \cup \partial D)$, 
$\partial B_1 \cap \partial B_2 = x_*$. Let us also assume that $x_* = 0$ 
%tk and chose
and choose 
an orthonormal coordinate system $(x_1,\ldots,x_d)$ 
so that the positive axis $0x_1$ is in the direction of $\vec{0p}$ 
where $p$ is the center of the ball $B_1$. Note that $x$ lies
 on the interval $0p$ so $x = (|x|,0,\ldots,0)$. Note also that
  $B_1 \subset D \subset (\overline{B_2})^c$ and 
  $B_1 \subset H(x) \subset (\overline{B_2})^c$. 
  For any open sets $A_1$, $A_2$ such that
   $A_1 \subset A_2$ we have $r_{A_1}(t,x,y) \ge r_{A_2}(t,x,y)$ so
$$
|r_D(t,x,x) - r_{H(x)}(t,x,x)| \le r_{B_1}(t,x,x) - r_{(\overline{B_2})^c}(t,x,x).
$$
Recall that for any open set $A \subset \Rd$ the function $r_A(t,x,y)$ 
satisfies the scaling property (\ref{scalingr}).
So in order to prove the proposition it suffices to show that 
\begin{eqnarray*}
&& \frac{1}{t^{d/\alpha}} 
\left( r_{B_1/{t^{1/\alpha}}}\left(1,\frac{x}{t^{1/\alpha}},\frac{x}{t^{1/\alpha}}\right)
- r_{(\overline{B_2})^c/{t^{1/\alpha}}}\left(1,\frac{x}{t^{1/\alpha}},\frac{x}{t^{1/\alpha}}\right) \right) \\\\
&\le& 
\frac{c t^{1/\alpha}}{R t^{d/\alpha}}\left(\left(\frac{t^{1/\alpha}}{\delta_D(x)}\right) \wedge 1\right),
\end{eqnarray*}
for any $x = (|x|,0,\ldots,0)$, $|x| \in (0,R/2]$.

Given the ball $B_1$, we set 
$W = B_1/{t^{1/\alpha}}$, $U = (\overline{B_2})^c/{t^{1/\alpha}} $ and $s = R/t^{1/\alpha}$. Note that 
$s$ is the radius of $W$. 
Recall that $\partial W \cap \partial U = x_* = 0$. Note also that 
$$
\frac{\delta_D(x)}{t^{1/\alpha}} = \delta_D \left(\frac{x}{t^{1/\alpha}}\right) \le 
\dist\left(\frac{x}{t^{1/\alpha}},0\right) = 
\left|\frac{x}{t^{1/\alpha}}\right|.
$$
Replacing  $x/t^{1/\alpha}$ by $x$,  it follows that in order 
to prove the proposition it suffices to show
$$
r_W(1,x,x) - r_U(1,x,x) \le c s^{-1} (|x|^{-d - \alpha/2 + 1} \wedge 1),
$$
for any $x = (|x|,0,\ldots,0)$, $|x| \in (0,s/2]$.

By Proposition \ref{pFpD} it suffices to show 
\begin{eqnarray}
\label{balls1}
&& E^x(\tau_W < 1, X(\tau_W) \in U \setminus \overline{W}; p_U(1 - \tau_W, X(\tau_W),x)) \\
\label{balls2}
&\le& c s^{-1} (|x|^{-d - \alpha/2 + 1} \wedge 1),
\end{eqnarray}
for any $x = (|x|,0,\ldots,0)$, $|x| \in (0,s/2]$.

Let us set $A = \{\tau_W < 1, X(\tau_W) \in U \setminus 
\overline{W}\}$, $f = p_U(1 - \tau_W, X(\tau_W),x)$. 
So the expression in (\ref{balls1}) is just $E^x(A;f)$. 
Let $P$ be the following set $P = B(0,s) 
\setminus (\overline{W} \cup \overline{(U^c)})$. 
We will divide $E^x(A;f)$ into 3 terms:
\begin{equation}
\label{part1}
E^x(X(\tau_W) \notin P, A;f),
\end{equation}
\begin{equation}
\label{part2}
E^x(X(\tau_W) \in P, |X(\tau_W) - x| > 1, A;f)
\end{equation}
and
\begin{equation}
\label{part3}
E^x(X(\tau_W) \in P, |X(\tau_W) - x| \le 1, A;f).
\end{equation}
We estimate each term separately.

By (\ref{ptxy}) we have
$$
p_U(1 - \tau_W, X(\tau_W),x) \le p(1 - \tau_W, X(\tau_W),x) \le 
\frac{c}{|X(\tau_W) - x|^{d + \alpha}}.
$$
Let $a$, $b$ be the centers of $W$ and $(\overline{U})^c$.  That is, set $W = B(a,s)$ and  
$(\overline{U})^c = B(b,s)$. We have
\begin{eqnarray}
\label{balls3}
&&E^x(X(\tau_W) \notin P, A;f)\\
&&\le
c E^x(X(\tau_W) \notin (B(0,s) \cup B(a,s));|X(\tau_W) - x|^{-d - \alpha})\nonumber.
\end{eqnarray}

The distribution (harmonic measure) 
$P^x(X(\tau_{B(x_0,r)}) \in \cdot)$, $x \in B(x_0,r)$
 is well known. Indeed, by  $\cite{BGR}$ we have
$$
P^x(X(\tau_{B(x_0,r)}) \in V) = C_{\alpha}^d \int_V 
\frac{(r^2 - |x - x_0|^2)^{\alpha/2} 
\, dy}{(|y - x_0|^2 - r^2)^{\alpha/2} |x- y|^d},
$$
for $x \in B(x_0,r)$ and $V \subset B^c(x_0,r)$ 
where  $C_{\alpha}^d =
 \Gamma(d/2) \pi^{-d/2-1} \sin(\pi \alpha/2)$. 
Therefore (\ref{balls3}) is bounded above by 
\begin{equation}
\label{balls4}
c \int_{B^c(0,s) \cup B^c(a,s)} \frac{(s^2 - |x - a|^2)^{\alpha/2}
 \, dy}{(|y - a|^2 - s^2)^{\alpha/2} |x- y|^{2d + \alpha}}.
\end{equation}
Note that on the set $B^c(0,s) \cup B^c(a,s)$ we have $|x - y| \ge c |a - y|$.
Changing to  polar coordinates $(\rho,\varphi_1,\ldots,\varphi_{d - 1})$ 
  centered at $a$ we see that  (\ref{balls4}) is bounded above by 
$$
c s^{\alpha} \int_{s}^{\infty} \frac{\rho^{d - 1}\, 
d\rho}{(\rho - s)^{\alpha/2} \rho^{\alpha/2} \rho^{2d + \alpha}} \le c s^{-d - \alpha}.
$$
Note that $s \ge 2$ because $t^{1/\alpha} \le R/2$. 
Using this and the fact that $|x| \in (0,s/2)$ we 
have $s^{-d - \alpha} \le  c s^{-1} (|x|^{-d - \alpha/2 + 1} \wedge 1)$. 
This  shows that $E^x(X(\tau_W) \notin P, A;f)$ is bounded by (\ref{balls2}).

Now we will estimate (\ref{part2}).
By Corollary \ref{heatcor}  we have
$$
p_U(1 - \tau_W, X(\tau_W),x)) \le \frac{c \delta_U^{\alpha/2}(X(\tau_W))}{|X(\tau_W) - x|^{d + \alpha}},
$$
on the set $|X(\tau_W) - x| > 1$. 
Thus  (\ref{part2}) is bounded  above by 
\begin{eqnarray}
\nonumber
&& c E^x(X(\tau_W) \in P; \delta_U^{\alpha/2}(X(\tau_W)) \, |X(\tau_W) - x|^{-d - \alpha}) \\
\label{balls6}
&=& c \int_{P} \frac{(s^2 - |x - a|^2)^{\alpha/2} \, \delta_U^{\alpha/2}(y) \, dy}{(|y - x_0|^2 - s^2)^{\alpha/2} |x- y|^{2d + \alpha}}.
\end{eqnarray}
Since  $(s^2 - |x - a|^2)^{\alpha/2} \le c |x|^{\alpha/2} s^{\alpha/2}$ and $(|y - x_0|^2 - s^2)^{\alpha/2} \ge c \delta_W^{\alpha/2}(y) s^{\alpha/2}$, (\ref{balls6}) is bounded  above by 
\begin{equation}
\label{balls7}
c |x|^{\alpha/2} \int_{P} \frac{\delta_U^{\alpha/2}(y) \, dy}{\delta_W^{\alpha/2}(y) \, |x- y|^{2d + \alpha}}.
\end{equation}

Let us recall that $|X(\tau_W) - x| > 1$ so $|x - y| \ge 1$ in (\ref{balls7}). Now we will use techniques developed in \cite{K}. For completeness we repeat several arguments 
 from that paper. Let us introduce spherical coordinates $y = (\rho,\varphi_1,\ldots,\varphi_{d - 1})$ 
 with the origin $0$ and principal axis $\overline{0a}$. There are small technical differences between the case $d = 2$ where $\varphi_1 \in [0,2\pi)$ and the case $d \ge 3$ where $\varphi_1 \in [0,\pi)$. 
 We will make calculations for the case $d \ge 3$. 
 The case $d = 2$ is very similar and we leave it to the reader. 

Consider the triangle $T=y0a$ with vertices  $y$, $0$, $a$. We have
$$
|y - a|^2 = |y - 0|^2 + |0 - a|^2 - 2|y - 0||0 - a| \cos\varphi_1.
$$
Since $|0 - a| = s$ and $|y - 0| = \rho$, we get
$$
|y - a|^2 = \rho^2 + s^2 - 2 \rho s \cos\varphi_1.
$$
For $0 < \rho < s$ let $\beta(\rho)$ be the angle satisfying $0 \le \beta(\rho) \le \pi/2$ and
\begin{equation}
\label{betarho}
s^2 = \rho^2 + s^2 - 2 \rho s \cos\beta(\rho).
\end{equation}
The angle $\beta(\rho)$ has the following property. $y = (\rho,\varphi_1,\ldots,\varphi_{d - 1}) \in P$ if and only if $0 < \rho < s$ and 
$$
\pi - \beta(\rho) \ge \varphi_1 \ge \beta(\rho).
$$
From (\ref{betarho}) we get 
$$
\cos\beta(\rho) = \frac{\rho}{2 s}.
$$
Thus if $y \in P$ we have $\cos\beta(\rho) < 1/2$. Hence
$$
\pi/2 \ge \beta(\rho) \ge \pi/3 \quad \text{and} \quad \sin\beta(\rho) \ge \sqrt{3}/2.
$$
Note that if $\pi/2 \ge \gamma \ge 0$ then $(\pi/2) \sin\gamma \ge \gamma$. Using this we obtain
$$
\frac{\pi \rho}{4 s} = \frac{\pi}{2} \sin\left(\frac{\pi}{2} - \beta(\rho) \right) \ge 
\frac{\pi}{2} - \beta(\rho).
$$
Hence
\begin{equation}
\label{pibeta}
\frac{\pi \rho}{2 s} \ge \pi - 2 \beta(\rho). 
\end{equation}
For $y \in P$ the double angle formula gives 
\begin{eqnarray}\label{doubleangle}
&& |y - a|^2 - s^2 = \rho^2 - 2 \rho s \cos((\varphi_1 - \beta(\rho)) + \beta(\rho))\\
&=& \rho^2 - 2 \rho s \cos\beta(\rho) \cos(\varphi_1 - \beta(\rho)) 
+ 2 \rho s \sin\beta(\rho) \sin(\varphi_1 - \beta(\rho)). \nonumber 
\end{eqnarray}
But  by (\ref{betarho}) we have $\rho^2 - 2 \rho s \cos\beta(\rho) = 0$ and this gives that 
(\ref{doubleangle}) is bounded below by 
$$
 2 \rho s \sin\beta(\rho) \sin(\varphi_1 - \beta(\rho)) 
\ge \rho s \sin(\varphi_1 - \beta(\rho)).
$$
It follows that for $y \in P$,
$$
\delta_W(y) = |y - a| - s \ge (|y - a|^2 - s^2)/s \ge \rho \sin(\varphi_1 - \beta(\rho)).
$$
Recall that $(\overline{U})^c = B(b,s)$. Similarly as above for $y \in P$ we obtain
$$
|y - b|^2 - s^2 \le \rho^2 - 2 \rho s \cos(\pi - \beta(\rho)) = 2 \rho^2,
$$
so $\delta_U(y) = |y - b| - s \le c \rho^2/s$.

We now return to  (\ref{balls7}). Let us recall that $|x - y| \ge 1$.
Let us divide $P$ into 2 sets:
$$
P_1 = \{y \in P: |y - x| \in (1, 2|x|)\},
$$
and 
$$
P_2 = \{y \in P: |y - x| \ge 1 \vee 2|x|\}.
$$
We first estimate the integral in (\ref{balls7}) over the set $P_1$. Since the set $P_1$ 
is not empty only when $|x| \ge 1/2$, we may  assume that 
$|x| \ge 1/2$. Note also that for $y \in P_1$ we have $|y - x| \ge c |x|$. It follows that
\begin{equation}
\label{P1}
c |x|^{\alpha/2} \int_{P_1} \frac{\delta_U^{\alpha/2}(y) \, dy}{\delta_W^{\alpha/2}(y) \, |x- y|^{2d + \alpha}} 
\le c |x|^{-2d - \alpha/2} \int_{P_1} \frac{\delta_U^{\alpha/2}(y) \, dy}{\delta_W^{\alpha/2}(y)}.
\end{equation}
Note that for $y \in P_1$ we have $|y| \le 3 |x|$. Using polar coordinates we obtain 
\begin{eqnarray}
\nonumber
&& \int_{P_1} \frac{\delta_U^{\alpha/2}(y) \, dy}{\delta_W^{\alpha/2}(y)}\\ 
\nonumber
&\le& c \int_0^{3 |x|} \int_{\beta(\rho)}^{\pi - \beta(\rho)} \int_0^{\pi} \ldots \int_0^{\pi} \int_0^{2 \pi} \frac{\rho^{\alpha}/s^{\alpha/2}}{\rho^{\alpha/2} \sin^{\alpha/2}(\varphi_1 - \beta{\rho})} \\ \nonumber\\
&&  \quad \quad \quad \times \rho^{d - 1} \sin^{d - 2}\varphi_1 \ldots \sin\varphi_{d - 2} \, d\varphi_{d - 1} \ldots   \, d\rho\\ \nonumber\\
\label{deltaUW}
&\le& c \int_0^{3 |x|} \rho^{d - 1} \int_{\beta(\rho)}^{\pi - \beta(\rho)} 
\frac{\rho^{\alpha}/s^{\alpha/2} \, d\varphi_1}{\rho^{\alpha/2} \sin^{\alpha/2}(\varphi_1 - \beta{\rho})} \, d\rho.
\end{eqnarray}
We now claim that 
\begin{equation}
\label{angle}
\int_{\beta(\rho)}^{\pi - \beta(\rho)} 
\frac{\rho^{\alpha}/s^{\alpha/2} \, d\varphi_1}{\rho^{\alpha/2} \sin^{\alpha/2}(\varphi_1 - \beta{\rho})} \le \frac{c \rho}{s}.
\end{equation}
Indeed, the left hand side of (\ref{angle}) equals 
$$
\frac{\rho^{\alpha/2}}{s^{\alpha/2}} \int_{0}^{\pi - 2 \beta(\rho)} 
\frac{ d\varphi}{\sin^{\alpha/2}\varphi} \le
\frac{c \rho^{\alpha/2}}{s^{\alpha/2}} \int_{0}^{\pi - 2 \beta(\rho)} 
\frac{ d\varphi}{\varphi^{\alpha/2}}.
$$
But now  (\ref{angle}) follows from (\ref{pibeta}). Hence (\ref{deltaUW}) is bounded by $c s^{-1}|x|^{d + 1}$. It follows that (\ref{P1}) is bounded by $ c s^{-1} |x|^{-d - \alpha/2 + 1}$. We have assumed that $|x| \ge 1/2$ so (\ref{P1}) is bounded by  $c s^{-1} (|x|^{-d - \alpha/2 + 1} \wedge 1)$. 

Now we will estimate (\ref{balls7}) over the set $P_2$. For $y \in P_2$ we have $|y - x| \ge c |y|$. Note also that for $y \in P_2$ we have
$$
|y| \ge |y - x| - |x| \ge (1 - |x|) \vee |x| \ge (1/2) \vee |x|.
$$
Hence,
\begin{equation}
\label{P2}
c |x|^{\alpha/2} \int_{P_2} \frac{\delta_U^{\alpha/2}(y) \, dy}{\delta_W^{\alpha/2}(y) \, |x- y|^{2d + \alpha}} 
\le c |x|^{\alpha/2} \int_{P_2} \frac{\delta_U^{\alpha/2}(y) \, dy}{\delta_W^{\alpha/2}(y) |y|^{2d + \alpha}}.
\end{equation}
Using polar coordinates this is bounded above by 
$$
c |x|^{\alpha/2} \int_{(1/2) \vee |x|}^{\infty} \rho^{-2d - \alpha} \rho^{d - 1} \int_{\beta(\rho)}^{\pi - \beta(\rho)} 
\frac{\rho^{\alpha}/s^{\alpha/2} \, d\varphi_1}{\rho^{\alpha/2} \sin^{\alpha/2}(\varphi_1 - \beta{\rho})} \, d\rho.
$$
By (\ref{angle}) this is smaller than
$$
c s^{-1} |x|^{\alpha/2} \int_{(1/2) \vee |x|}^{\infty} \rho^{-d - \alpha} \, d\rho
\le c s^{-1} (|x|^{-d - \alpha/2 + 1} \wedge 1).
$$
It follows that (\ref{part2}) is bounded by (\ref{balls2}).

Now we will estimate (\ref{part3}). For this  we may assume that $|x| \le 1$. Let $P_3 = \{y \in P: \, |y| \le 2\}$. (\ref{part3}) is bounded above by
\begin{eqnarray*}
&& E^x(X(\tau_W) \in P_3, A;f) \\
&=& E^x(\tau_W < 1/2, X(\tau_W) \in P_3; p_U(1 - \tau_W, X(\tau_W),x)) \\
&+& E^x(\tau_W \in [1/2,1], X(\tau_W) \in P_3; p_U(1 - \tau_W, X(\tau_W),x)) \\
&=& \text{I} + \text{II}.
\end{eqnarray*}
We estimate $\text{I}$ first. When $\tau_W < 1/2$ we have $1 - \tau_W > 1/2$ so by Corollary \ref{heat1cor} we obtain
$$
p_U(1 - \tau_W, X(\tau_W),x)) \le c \delta_U^{\alpha/2}(X(\tau_W)).
$$
Therefore
\begin{eqnarray}
\nonumber
\text{I} &\le&  c E^x(X(\tau_W) \in P_3; \delta_U^{\alpha/2}(X(\tau_W))) \\
\label{balls8}
&=& c \int_{P_3} \frac{(s^2 - |x - a|^2)^{\alpha/2} \, \delta_U^{\alpha/2}(y) \, dy}{(|y - x_0|^2 - s^2)^{\alpha/2} |x- y|^{d}}.
\end{eqnarray}
Using the same argument used to estimate (\ref{balls6}) by (\ref{balls7}), we obtain that (\ref{balls8}) is bounded above by
\begin{equation*}
c |x|^{\alpha/2} \int_{P_3} \frac{\delta_U^{\alpha/2}(y) \, dy}{\delta_W^{\alpha/2}(y) \, |x- y|^{d}}.
\end{equation*}
We divide $P_3$ into 2 sets:
\begin{equation}
\label{P4}
P_4 = \{y \in P: |y - x| \le 2|x|\},
\end{equation}
\begin{equation}
\label{P5}
P_5 = \{y \in P: |y - x| > 2|x|\}.
\end{equation}
As before, the  arguments used for (\ref{P1}) and (\ref{P2}) give 
$$
c |x|^{\alpha/2} \int_{P_4} \frac{\delta_U^{\alpha/2}(y) \, dy}{\delta_W^{\alpha/2}(y) \, |x- y|^{d}} \le c s^{-1} |x|^{\alpha/2 + 1},
$$
$$
c |x|^{\alpha/2} \int_{P_5} \frac{\delta_U^{\alpha/2}(y) \, dy}{\delta_W^{\alpha/2}(y) \, |x- y|^{d}} \le c s^{-1} |x|^{\alpha/2}.
$$
Using the fact that $|x| \le 1$ we finally obtain that 
$$\text{I} \le c s^{-1} (|x|^{-d - \alpha/2 + 1} \wedge 1).$$

Now we need to estimate $\text{II}$.  By the generalized space--time Ikeda-Watanabe formula (Proposition \ref{IWprop}), 
$$
\text{II} = \int_W \int_{1/2}^{1} p_W(s,x,z) \int_{P_3} \frac{\mathcal{A}_{d,-\alpha}}{|z - y|^{d + \alpha}} p_U(1-s,y,x) \, dy \, ds \, dz.
$$
We estimate $p_W(s,x,z)$ in the following way. We have $s \in [1/2,1]$. For $z \in W \cap B(0,3)$ by Corollary \ref{heat1cor} we obtain $p_W(s,x,z) \le c \delta_W^{\alpha/2}(z)$. For $z \in W \cap B^c(0,3)$ we get $p_W(s,x,z) \le p(s,x,z) \le c |x - z|^{-d - \alpha}$.
It follows that
\begin{eqnarray*}
\text{II} &=& c \int_{W \cap B(0,3)} \delta_W^{\alpha/2}(z) 
\int_{P_3} \frac{1}{|z - y|^{d + \alpha}} 
\int_{1/2}^{1} p_U(1-s,y,x) \, ds \, dy  \, dz \\
&+& c \int_{W \cap B^{c}(0,3)} \frac{1}{|x - z|^{d + \alpha}}
\int_{P_3} \frac{1}{|z - y|^{d + \alpha}} 
\int_{1/2}^{1} p_U(1-s,y,x) \, ds \, dy  \, dz.
\end{eqnarray*}
We have
$$
\int_{1/2}^{1} p_U(1-s,y,x) \, ds \le G_U(y,x),
$$
where $G_U(y,x)$ is the Green function for $U$. Hence $\text{II}$ is bounded above by
$$
c \int_{P_3} G_U(y,x) 
\left( \int_{W \cap B(0,3)} \frac{\delta_W^{\alpha/2}(z) \, dz}{|z - y|^{d + \alpha}} 
+ \int_{W \cap B^{c}(0,3)} \frac{dz}{|x - z|^{d + \alpha} |z - y|^{d + \alpha}} \right)
\, dy.
$$
For $y \in P_3$ and $z \in W$ we have $\delta_W(z) \le |z - y|$ and hence 
$$
\int_{W \cap B(0,3)} \frac{\delta_W^{\alpha/2}(z) \, dz}{|z - y|^{d + \alpha}}
\le \int_{B^{c}(y,\delta_W(y))} \frac{|z - y^{\alpha/2}| \, dz}{|z - y|^{d + \alpha}} \le
\frac{c}{\delta_W^{\alpha/2}(y)}.
$$
For $y \in P_3$ and $z \in W \cap B^{c}(0,3)$ we have $|x - z| \ge c |z|$, $|y - z| \ge c |z|$, $\delta_W^{-\alpha/2}(y) \ge c$ thus  
$$
\int_{W \cap B^{c}(0,3)} \frac{dz}{|x - z|^{d + \alpha} |z - y|^{d + \alpha}} 
\, dy \le c \int_{B^{c}(0,3)} \frac{dz}{|z|^{2d + 2 \alpha}} \le c \le \frac{c}{\delta_W^{\alpha/2}(y)}.
$$
Hence
$$
\text{II} \le c \int_{P_3} \frac{G_U(y,x)}{\delta_W^{\alpha/2}(y)} \, dy.
$$
Recall that $|x| \le 1$. By Corollary \ref{Green1} for $y \in P_3$ we get $G_U(y,x) \le c \delta_U^{\alpha/2}(y) |x - y|^{\alpha/2 - d}$. Thus  
$$
\text{II} \le c \int_{P_3} \frac{\delta_U^{\alpha/2}(y)}{|x - y|^{d - \alpha/2} \delta_W^{\alpha/2}(y)} \, dy.
$$
Finally, we can divide $P_3$ into sets $P_4$, $P_5$ (see \ref{P4}, \ref{P5}). The same arguments used for  (\ref{P1}, \ref{P2}) and the fact that $|x| \le 1$ give that  
$$\text{II} \le c s^{-1} (|x|^{-d - \alpha/2 + 1} \wedge 1).$$
This shows inequality (\ref{balls1} - \ref{balls2}) and finishes the proof.
\end{proof}

\noindent {\it Acknowledgment. We would like to thank the referee for the careful reading of the paper and the many useful comments.}

\end{document}